\documentclass[a4paper,10pt]{article}
\usepackage{fullpage}
\usepackage{amsmath,amssymb,latexsym,amsthm}
\usepackage[pdftex]{graphicx}
\usepackage{graphics}
\usepackage{psfrag}
\usepackage{color, epsfig}
\usepackage{mathrsfs}
\usepackage[frenchb,english]{babel}
\usepackage[utf8]{inputenc}

\def\R{{\mathbb R}}
\def\N{{\mathbb N}}

\def\P{{\mathbb P}}

\def\G{{\mathscr{G}}}
\def\W{{\mathscr{W}}}
\def\ds{\displaystyle}



\newcommand{\norm}[1]{\left\Vert#1\right\Vert}

\renewcommand{\leq}{\leqslant}
\renewcommand{\geq}{\geqslant}

\newtheorem{theorem}{Theorem}[section]

\newtheorem{corollary}[theorem]{Corollary}
\newtheorem{proposition}[theorem]{Proposition}
\newtheorem{lemma}[theorem]{Lemma}
\newtheorem{remark}[theorem]{Remark}
\numberwithin{equation}{section}

%
\title{Control issues and linear projection constraints on the control and on the controlled trajectory\footnote{This work has been supported by the Agence Nationale de la Recherche, Project IFSMACS, grant ANR-15-CE40-0010, and by the CIMI Labex, Toulouse, France, under grant ANR-11-LABX-0040-CIMI.}}
\author{
Sylvain Ervedoza\footnote{Institut de Math\'ematiques de Toulouse ; UMR5219; Universit\'e de Toulouse ; CNRS ; UPS IMT, F-31062 Toulouse Cedex 9, France, 
{\tt sylvain.ervedoza@math.univ-toulouse.fr}}
}

\date\today
\begin{document}
\maketitle
\begin{abstract}
	The goal of this article is to discuss controllability properties for an abstract linear system of the form $y' = A y + B u$ under some additional linear projection constraints on the control $u$ or / and on the controlled trajectory $y$. In particular, we discuss the possibility of imposing the linear projections of the controlled trajectory and of the control, in the context of approximate controllability, exact controllability and null-controllability. As it turns out, in all these settings, for being able to impose linear projection constraints on the control or/and the controlled trajectory, we will strongly rely on a unique continuation property for the adjoint system which, to our knowledge, has not been identified so far, and which does not seem classical. We shall therefore provide several instances in which this unique continuation property can be checked.  
\end{abstract}
%
%
%
%
%
%
%
%
%
%
\section{Introduction}
%
%
%
The goal of this article is to study controllability issues for an abstract system of the form 
\begin{equation}
	\label{ControlledEq}
		y'  = A y + B u, \quad \hbox{ for } t \in (0,T), \qquad y(0) = y_0.
\end{equation}
Let us make precise the functional setting we shall consider in the following:
\begin{itemize}
	\item[(H1)] $A$ is assumed to generate a $C_0$ semigroup on a Hilbert space $H$,
	\item[(H2)] $B$ is the control operator, assumed to belong to $\mathscr{L}(U; H)$, where $U$ is a Hilbert space.
\end{itemize}
The function $y = y(t)$ is then the state function, $y_0$ is the initial datum, and $u$ is the control function, assumed to belong to $L^2(0,T; U)$.

Note that, within these assumptions, if $y_0 \in H$ and $u \in L^2(0,T; U)$, the solution $y$ of \eqref{ControlledEq} belongs to $\mathscr{C}^0([0,T]; H)$.

In this article, we wish to understand the requirements needed to be able to control the state $y$ solving \eqref{ControlledEq} and to impose the linear projections on $y$ and / or on $u$ and / or $y(T)$ in some vector spaces. In particular, we shall consider the following setting:
\begin{itemize}
	\item[(H3)] $\G$ is a closed vector space of $L^2(0,T; U)$, and $\P_\G$ is the orthogonal projection on $\G$ in $L^2(0,T; U)$.
	\item[(H4)] $\W$ is a closed vector space of $L^2(0,T; H)$, and $\P_\W$ is the orthogonal projection on $\W$ in $L^2(0,T; H)$.
	\item[(H5)] $E$ is a finite dimensional space of $H$, and $\P_E$ is the orthogonal projection on $E$ in $H$.
\end{itemize}

We shall then discuss the following properties.
\\
{\bf Approximate controllability and linear projection constraints:} For $y_0 \in H$ and $y_1 \in H$, $\varepsilon >0$, $g_* \in \G$, $w_* \in \W$, can we find control functions $u \in L^2(0,T; U)$ such that 
\begin{equation}
	\label{Moments-Of-U-in-G}
		\P_\G u = g_*
\end{equation}
and the solution $y$ of \eqref{ControlledEq} satisfies
\begin{equation}
	\label{ApproxCont-Req}
		\norm{y(T) - y_1}_H \leq \varepsilon, 
\end{equation}
and 
\begin{equation}
	\label{Moments-Of-Y-in-W}
		\P_\W y = w_*, 
\end{equation}
and
\begin{equation}
	\label{Proj-Of-Y-T-in-E}
		\P_E y(T) = \P_E y_1 \ ?
\end{equation}
{\bf Exact controllability and linear projection constraints:} For $y_0 \in H$ and $y_1 \in H$, $g_* \in \G$ and $w_* \in \W$, can we find control functions $u \in L^2(0,T; U)$ such that \eqref{Moments-Of-U-in-G} holds, and the solution $y$ of \eqref{ControlledEq} satisfies
\begin{equation}
	\label{ExactCont-Req}
		y(T)= y_1, 
\end{equation}
and \eqref{Moments-Of-Y-in-W}?
\\
{\bf Null controllability and linear projection constraints:} For $y_0 \in H$, $g_* \in \G$ and $w_* \in \W$, can we find control functions $u \in L^2(0,T; U)$ such that \eqref{Moments-Of-U-in-G} holds, and the solution $y$ of \eqref{ControlledEq} satisfies
\begin{equation}
	\label{NullCont-Req}
		y(T)= 0, 
\end{equation}
and \eqref{Moments-Of-Y-in-W}?
Of course, the above problems correspond to reinforcements of the classical notions of approximate controllability, exact controllability and null controllability, for which we refer to the textbook \cite{TWBook}. To be more precise, the only originality in the above notions lies in the conditions \eqref{Moments-Of-U-in-G} and \eqref{Moments-Of-Y-in-W} on the respective projections of $u$ on $\G$ and $y$ on $\W$. This question appeared to be of interest in some control problems, for instance in order to ensure insensibility with respect to some parameters, see e.g. the book \cite{Lions-Sentinelles-1992}. We will come back to these questions later on an example inspired by previous works \cite{Nakoulima-2004,Mophou-Nakoulima-2008,Mophou-Nakoulima-2009,Peng-2015}. In fact, our interest in this question was triggered by our work \cite{Chowdhury-Erv-2019}, in which at some part of the proof, we needed to derive controls satisfying appropriate projection constraints (there, we managed to build such controls by using some null-controllability results and the structure of the constraints we wanted to impose), and by the recent work \cite{Lissy-Privat-Simpore} on insensibility with respect to variations of the domain. 
%

%
%
%
%
%
%
%
%
Let us start with the problem of approximate controllability and linear projection constraints. 
\begin{theorem}[Approximate controllability with linear projection constraint]
	\label{Thm-ApproxControl}
	Let the hypotheses (H1)--(H5) be satisfied, and let $T>0$.
	\\
	Assume the following unique continuation property: If, for some $z_T \in H$, for some $g \in \G$ and $w \in \W$, the solution $z$ of 
	\begin{equation}
		\label{AdjointEq}
		z' +  A^* z =  w , \quad \hbox{ for } t \in (0,T), \quad \text{ with } z(T) = z_T, 
	\end{equation}
	satisfies
	\begin{equation}
		\label{LinearRelation}
		B^* z =  g  \quad \hbox{ in } (0,T), 
	\end{equation}
	then 
	\begin{equation}
		\label{Unique-Continuation}
		z_T = 0, 
		\hphantom{ and } \quad g = 0,
		\quad \hbox{ and } \quad w = 0.
	\end{equation}
	Assume moreover that the vector space $\W$ is of finite dimension.
	\\
	Then for any $y_0$ and $y_1$ in $H$, $\varepsilon >0$, $g_* \in \G$, and $w_* \in \W$, there exists a control function $u \in L^2(0,T; U)$ such that \eqref{Moments-Of-U-in-G} holds, the solution $y$ of \eqref{ControlledEq} satisfies \eqref{ApproxCont-Req}, and the conditions \eqref{Moments-Of-Y-in-W} and \eqref{Proj-Of-Y-T-in-E}.
	\\
	In other words, one can solve the approximate controllability problem and exactly satisfy the linear projection constraints \eqref{Moments-Of-U-in-G} on $u$, \eqref{Moments-Of-Y-in-W} on $y$, and the constraint \eqref{Proj-Of-Y-T-in-E} on $y(T)$.
\end{theorem}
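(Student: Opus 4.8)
The plan is to run a duality (Hilbert Uniqueness Method) argument, arranged so that the finite-dimensional constraints \eqref{Moments-Of-Y-in-W} and \eqref{Proj-Of-Y-T-in-E} are met exactly while only \eqref{ApproxCont-Req} is relaxed to an $\varepsilon$-inequality.

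First I would reduce to a homogeneous problem. Writing $u = g_* + v$ with $v$ in the orthogonal complement $\G^{\perp}$ of $\G$ in $L^2(0,T;U)$ automatically ensures \eqref{Moments-Of-U-in-G}. Let $Y_0\in\mathscr{C}^0([0,T];H)$ be the solution of \eqref{ControlledEq} with initial datum $y_0$ and control $g_*$, and let $L\colon v\mapsto$ (solution of \eqref{ControlledEq} with zero initial datum and control $v$), which by (H1)--(H2) is bounded from $L^2(0,T;U)$ into $\mathscr{C}^0([0,T];H)$. Setting $\xi_1 := y_1 - Y_0(T)\in H$ and $\omega_* := w_* - \P_\W Y_0\in\W$, the remaining requirements amount to finding $v\in\G^{\perp}$ with $\P_\W(Lv) = \omega_*$ and $\P_E(Lv(T)) = \P_E\xi_1$ (both exactly) and $\norm{Lv(T) - \xi_1}_H\le\varepsilon$. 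Introduce the bounded operator $\Lambda\colon\G^{\perp}\to H\times\W$, $\Lambda v := \bigl(Lv(T),\,\P_\W(Lv)\bigr)$, and the orthogonal projection $\Pi\colon H\times\W\to E\times\W$, $\Pi(\eta,\omega) := (\P_E\eta,\omega)$. Splitting $H = E\oplus E^{\perp}$ and using $\norm{Lv(T)-\xi_1}_H^2 = \norm{\P_E(Lv(T)-\xi_1)}_H^2 + \norm{\P_{E^\perp}(Lv(T)-\xi_1)}_H^2$, the goal reduces to finding $v\in\G^{\perp}$ with $\Pi\Lambda v = (\P_E\xi_1,\omega_*)$ and $\norm{\P_{E^{\perp}}(Lv(T)-\xi_1)}_H\le\varepsilon$.

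The core step, and the only place where the hypothesis enters, is to show that $\Lambda$ has dense range in $H\times\W$. Pairing \eqref{ControlledEq} (with zero initial datum and control $v\in\G^{\perp}$) with the solution $z$ of \eqref{AdjointEq} and integrating by parts in time (valid for mild solutions) yields, for every $z_T\in H$ and $w\in L^2(0,T;H)$,
\begin{equation*}
	\langle Lv(T),z_T\rangle_H = \langle v,\,B^*z\rangle_{L^2(0,T;U)} + \langle Lv,\,w\rangle_{L^2(0,T;H)}.
\end{equation*}
Now let $(p,q)\in H\times\W$ be orthogonal to $\operatorname{Range}\Lambda$ and apply the identity with $z_T := p$, $w := -q$. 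Since $q\in\W$ and $\P_\W$ is the orthogonal projection onto $\W$, orthogonality reads $\langle Lv(T),p\rangle_H + \langle Lv,q\rangle_{L^2(0,T;H)} = 0$ for all $v\in\G^{\perp}$, which on substituting the identity becomes $\langle v,B^*z\rangle_{L^2(0,T;U)} = 0$ for all $v\in\G^{\perp}$, i.e. $B^*z\in(\G^{\perp})^{\perp} = \G$. Because $\W$ is a vector space, $-q\in\W$, so $z$ solves \eqref{AdjointEq} with source $-q\in\W$ and satisfies \eqref{LinearRelation} with $g := B^*z\in\G$; then \eqref{Unique-Continuation} forces $p = 0$, $B^*z = 0$ and $q = 0$. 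Hence $\overline{\operatorname{Range}\Lambda} = H\times\W$. Since $\Pi$ is continuous and maps onto the finite-dimensional space $E\times\W$, the range of $\Pi\Lambda$ is dense in $E\times\W$, hence (being a subspace of a finite-dimensional space) equal to it: $\Pi\Lambda$ is onto.

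It then remains to combine these two facts through an elementary lemma: if $\mathcal{T}\colon X\to Y_1\times Y_2$ is a bounded operator from a Hilbert space $X$ with dense range, $Y_2$ is finite-dimensional, and $P_2\mathcal{T}$ is onto $Y_2$, then for every $(\zeta_1,\zeta_2)\in Y_1\times Y_2$ and every $\varepsilon>0$ there is $x\in X$ with $P_2\mathcal{T}x = \zeta_2$ and $\norm{P_1\mathcal{T}x - \zeta_1}_{Y_1}\le\varepsilon$. (Indeed $\ker(P_2\mathcal{T})$ has finite codimension, so any continuous functional on $X$ vanishing there is of the form $x\mapsto\langle P_2\mathcal{T}x,\psi\rangle$; feeding $\psi$ back into density of $\operatorname{Range}\mathcal{T}$ shows that $P_1\mathcal{T}\bigl(\ker(P_2\mathcal{T})\bigr)$ is dense in $Y_1$, and one picks $x_0$ with $P_2\mathcal{T}x_0 = \zeta_2$ and then corrects $x_0$ by an element of $\ker(P_2\mathcal{T})$.) Applying this with $X = \G^{\perp}$, $\mathcal{T} = \Lambda$ seen as a map into $E^{\perp}\times(E\times\W)$, $Y_1 = E^{\perp}$, $Y_2 = E\times\W$ and the target $\bigl(\P_{E^\perp}\xi_1,\,(\P_E\xi_1,\omega_*)\bigr)$ produces the desired $v$; then $u = g_* + v$ and $y = Y_0 + Lv$ satisfy \eqref{Moments-Of-U-in-G}, \eqref{ApproxCont-Req}, \eqref{Moments-Of-Y-in-W} and \eqref{Proj-Of-Y-T-in-E}. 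Finite-dimensionality of $\W$ is used precisely in passing from density to surjectivity of $\Pi\Lambda$ (and in the lemma); I expect the only genuine subtlety to be this bookkeeping that decouples the rigid finite-dimensional constraints from the soft one, which is exactly what singles out \eqref{Unique-Continuation} as the natural assumption.
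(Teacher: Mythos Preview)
Your argument is correct and takes a genuinely different route from the paper. The paper introduces a functional
\[
J_{\text{ap}}(z_T,g,w,f)
=\tfrac12\!\int_0^T\!\bigl(\|B^*z+g\|_U^2+\|f+w\|_H^2\bigr)\,dt
+\langle y_0,z(0)\rangle-\langle y_1,z_T\rangle
+\!\int_0^T\!\langle B^*z,g_*\rangle
+\!\int_0^T\!\langle f,w_*\rangle
+\varepsilon\|(I-\P_E)z_T\|
\]
on $H\times\G\times\W\times L^2(0,T;H)$, where $z$ solves $z'+A^*z=f$, $z(T)=z_T$. The heart of the paper's proof is a coercivity lemma, obtained by a contradiction/compactness argument in which the finite dimension of $\W$ is used to upgrade weak to strong convergence; the minimizer's Euler--Lagrange conditions then deliver the control together with all the constraints.

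By contrast, you bypass the variational machinery entirely: you read the unique continuation property \eqref{Unique-Continuation} directly as the statement that $\Lambda:\G^\perp\to H\times\W$ has dense range, and then invoke finite dimensionality of $E\times\W$ only to pass from density to surjectivity of $\Pi\Lambda$ and to run the elementary decoupling lemma. Your approach is shorter and makes the role of the finite-dimensionality hypothesis more transparent; the paper's approach is constructive (the control comes from a minimizer), which is better suited to cost estimates and dovetails with the exact and null controllability results proved later by the same template. Both proofs use \eqref{Unique-Continuation} at the same pivotal moment, just packaged differently.
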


%

Theorem \ref{Thm-ApproxControl} is proved in Section \ref{Sec-Proof-Approx}. 

Before going further, several remarks are in order. 

First, let us recall that it is well-known (see e.g. \cite[Theorem 11.2.1]{TWBook}) that approximate controllability (that is, the problem of, for any $\varepsilon >0$, $y_0$ and $y_1$ in $H$, finding a control function $u \in L^2(0,T; U)$ such that the solution $y$ of \eqref{ControlledEq} satisfies \eqref{ApproxCont-Req}) is equivalent to the following unique continuation property: 
\begin{equation}
	\label{UC-Classical}
	\hbox{If } z \in \mathscr{C}^0([0,T]; H) \hbox{ satisfies } 
	\left\{ 
		\begin{array}{ll}
		 z' + A^* z = 0, \quad & \hbox{ for }t \in (0,T),
		\\
		z(T) = z_T, 
		\\
		B^* z = 0 ,  \quad & \hbox{ for } t \in (0,T),
		\\
		\hbox{ with } z_T  \in H
		\end{array} 
	\right.	
	\qquad \hbox{ then } \qquad
		z_T = 0.
\end{equation}
In this sense, the unique continuation property assumed in Theorem \ref{Thm-ApproxControl}, namely
\begin{equation}
	\label{UC}
	\tag{UC}
	\hbox{If } z \hbox{ satisfies }
	\left\{ 
		\begin{array}{ll}
		 z' + A^* z = w, \quad & \hbox{ for } t \in (0,T),
		\\
		z(T) = z_T, 
		\\
		B^* z = g ,  \quad &  \hbox{ for } t \in (0,T),
		\\
		\lefteqn{\hbox{ with } (z_T, g, w) \in H \times \G \times \W}
		\end{array} 
	\right.	
	\qquad \hbox{ then } \qquad
	\left\{ 
		\begin{array}{l}
		z_T = 0,
		\\
		g = 0, 
		\\
		w = 0,
		\end{array} 
	\right.	
\end{equation}
 is a stronger version of the  standard unique continuation property \eqref{UC-Classical} for the adjoint equation \eqref{AdjointEq}. 

It is therefore quite natural to ask if the unique continuation property \eqref{UC} assumed in Theorem~\ref{Thm-ApproxControl} is sharp or not. We claim that this is indeed the sharp condition. Indeed, if there exists some non-zero $(z_T, g, w) \in H \times \G \times \W$ such that \eqref{AdjointEq} and \eqref{LinearRelation} holds, then one easily checks that, for any $u  \in L^2(0,T; U)$, the solution $y$ of \eqref{ControlledEq} necessarily satisfies:
$$
	0 = \langle y(T), z_T \rangle_H - \langle y_0, z(0) \rangle_H
	- 
	\int_0^T \langle y(t),  w(t) \rangle_H \, dt
	- 
	\int_0^T \langle u(t),  g(t) \rangle_U \, dt. 
$$
In particular, if one wishes to impose $\P_\W y = w$, $\P_\G u = g$ for a solution $y$ of \eqref{ControlledEq} starting from $y_0 =  0$, we deduce that necessarily, 
$$
	\norm{y(T)+z_T}_H \norm{z_T}_H \geq \norm{z_T}_H^2  +  \| w \|_{L^2(0,T; H)}^2 + \| g \|_{L^2(0,T; U)}^2.
$$
Since the above right hand-side is strictly positive by assumption, this implies that there exists a neighborhood of $-z_T$ such that the trajectories $y$ of \eqref{ControlledEq} starting from $y_0 = 0$ cannot reach this neighborhood and satisfy the constraints $\P_\W y = w$ and $\P_\G u = g$.
It might be surprising at first that the unique continuation property \eqref{UC} in Theorem \ref{Thm-ApproxControl} does not depend on the vector space $E$ appearing in condition \eqref{Proj-Of-Y-T-in-E}. In fact, it was already noticed in \cite{Zuazua-97,FernandezCaraZuazua1} in the case of $\G = \{0\}$ and $\W = \{0\}$, that the usual unique continuation property is sufficient to solve the approximate controllability problem with the constraint \eqref{Proj-Of-Y-T-in-E}. This property strongly relies on the fact that $E$ is finite dimensional (recall that it is part of assumption (H5)).
The unique continuation property \eqref{UC} may not seem easy to check in practice. We will however give several examples on which this can be checked out, one which is in fact the one in \cite{Nakoulima-2004,Mophou-Nakoulima-2008,Mophou-Nakoulima-2009}, and another one which is inspired by the one in \cite{Chowdhury-Erv-2019}. The interested reader can go directly to Section \ref{Sec-Examples}. 
An interesting point is that our approach can in fact be developed as well for the other notions of controllability stated in the introduction, namely the exact controllability problem with linear projection constraints and the null-controllability problem with linear projection constraints. 

\begin{theorem}[Exact controllability with linear projection constraints]
	\label{Thm-ExactCont}
	Let the hypotheses (H1)--(H4) be satisfied, and let $T>0$, and assume the unique continuation property \eqref{UC}.
	\\
	We further assume the following observability inequality: there exists a constant $C>0$ such that for all $z_T \in H$, the solution $z$ of 
	\begin{equation}
		\label{Adjoint-Hom}
		z' + A^* z = 0, \quad \hbox{ for } t \in (0,T), \qquad z(T) = z_T, 
	\end{equation}
	satisfies
	\begin{equation}
		\label{Obs-z-T}
		\norm{z_T}_H \leq C \norm{ B^* z}_{L^2(0,T; U)}. 
	\end{equation}
	Assume moreover that the vector spaces $\G$ and $\W$ are of finite dimension.
	\\
	Then for any $y_0$ and $y_1$ in $H$, $g_* \in \G$, and $w_* \in \W$, there exists a control function $u \in L^2(0,T; U)$ such that \eqref{Moments-Of-U-in-G} holds, the solution $y$ of \eqref{ControlledEq} satisfies \eqref{ExactCont-Req} and the condition \eqref{Moments-Of-Y-in-W}.
	\\
	In other words, one can solve the exact controllability problem and exactly satisfy the constraints \eqref{Moments-Of-U-in-G} on $u$ and \eqref{Moments-Of-Y-in-W} on $y$.
\end{theorem}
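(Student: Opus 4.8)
The plan is to recast the exact controllability problem with constraints as a surjectivity statement for a suitable operator and to derive it from a coercivity estimate for the dual functional, exactly as in the classical Hilbert Uniqueness Method but on an enlarged state space. Concretely, I would introduce the \emph{augmented output} $\Lambda : L^2(0,T;U) \to H \times \G \times \W$ defined by $\Lambda u = (y(T) - e^{TA}y_0,\ \P_\G u,\ \P_\W y - \P_\W(\text{free evolution}))$, where $y$ solves \eqref{ControlledEq}; the task is to show that $\Lambda$ is onto, since then one can prescribe $y(T) = y_1$, $\P_\G u = g_*$ and $\P_\W y = w_*$ simultaneously. The adjoint of $\Lambda$ is computed by the same duality identity already displayed in the remarks after Theorem~\ref{Thm-ApproxControl}: for $(z_T, g, w) \in H \times \G \times \W$, $\Lambda^*(z_T, g, w)$ is, up to signs, the function $B^* z + g$ where $z$ solves the adjoint system \eqref{AdjointEq} with data $z(T) = z_T$ and right-hand side $w$. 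Surjectivity of $\Lambda$ is then equivalent to the \emph{a priori estimate}
\begin{equation}
	\label{CoercivityTarget}
	\norm{z_T}_H^2 + \norm{g}_{L^2(0,T;U)}^2 + \norm{w}_{L^2(0,T;H)}^2 \leq C \norm{B^* z + g}_{L^2(0,T;U)}^2,
\end{equation}
for all $(z_T, g, w) \in H \times \G \times \W$, where $z$ solves \eqref{AdjointEq}. Indeed the functional $J(z_T, g, w) = \frac12 \norm{B^* z + g}^2 + (\text{linear terms in the data } y_0, y_1, g_*, w_*)$ is then coercive on the Hilbert space $H \times \G \times \W$, so it has a minimizer, and the Euler--Lagrange equation at the minimizer produces the desired control $u = -(B^* z + g)$ (or the appropriate sign) together with the constraints.

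The heart of the matter is therefore proving \eqref{CoercivityTarget}. I would argue by contradiction and compactness. Suppose it fails: then there is a sequence $(z_T^n, g_n, w_n)$ with $\norm{z_T^n}_H^2 + \norm{g_n}^2 + \norm{w_n}^2 = 1$ but $\norm{B^* z_n + g_n}_{L^2} \to 0$. Here is where finite-dimensionality of $\G$ and $\W$ is used crucially: the closed bounded sequences $(g_n)$ in $\G$ and $(w_n)$ in $\W$ are precompact in norm, so up to extraction $g_n \to g$ strongly in $\G$ and $w_n \to w$ strongly in $\W$; and $z_T^n \rightharpoonup z_T$ weakly in $H$. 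One then needs to upgrade this to strong convergence of $z_T^n$. The observability inequality \eqref{Obs-z-T} applied to the homogeneous adjoint solution with terminal data $z_T^n - z_T^m$ (after subtracting off the inhomogeneous parts, which converge strongly) controls $\norm{z_T^n - z_T^m}_H$ by $\norm{B^*(\text{homogeneous part of } z_n - z_m)}_{L^2}$; writing the full adjoint solution $z_n$ as the sum of the homogeneous solution with data $z_T^n$ and the particular solution driven by $w_n$ (the latter converging strongly because $w_n$ does and $B^*$ composed with the solution map is bounded), and using that $B^* z_n + g_n \to 0$ while $g_n \to g$, one gets that $B^*(\text{homogeneous part of } z_n)$ is Cauchy in $L^2$, hence $z_T^n$ is Cauchy in $H$, hence converges strongly to $z_T$. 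Passing to the limit, the triple $(z_T, g, w)$ satisfies $\norm{z_T}^2 + \norm{g}^2 + \norm{w}^2 = 1$ (so it is nonzero) and $B^* z + g = 0$, i.e. $B^* z = -g \in \G$ with $z$ solving the adjoint equation with source $w \in \W$. The unique continuation property \eqref{UC} then forces $z_T = 0$, $g = 0$, $w = 0$, contradicting the normalization.

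The main obstacle I anticipate is precisely the strong-convergence upgrade for $z_T^n$ in the compactness argument: one must carefully decompose the adjoint state into its homogeneous and inhomogeneous parts so that the observability inequality \eqref{Obs-z-T}, which is stated only for the \emph{homogeneous} equation \eqref{Adjoint-Hom}, can be brought to bear, and one must check that the inhomogeneous part, depending continuously on $w_n \in \W$, is handled by the strong compactness of $(w_n)$ that finite-dimensionality of $\W$ provides. A secondary point requiring care is the precise identification of $\Lambda^*$ and of the signs, so that the Euler--Lagrange equation of the coercive functional genuinely yields a control $u$ with $\P_\G u = g_*$ and $\P_\W y = w_*$ and $y(T) = y_1$; this is a routine but slightly delicate bookkeeping exercise with the duality pairing identity already recorded in the excerpt. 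Once \eqref{CoercivityTarget} is in hand, the existence of the constrained control follows from the direct method in the calculus of variations with no further difficulty.
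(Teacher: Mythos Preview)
Your approach is correct and close in spirit to the paper's, but there is one genuine structural difference worth noting. The paper does \emph{not} compute the adjoint $\Lambda^*$ on $H\times\G\times\W$ as you propose; instead it introduces an auxiliary variable $f\in L^2(0,T;H)$ and works on the larger space $H\times\G\times\W\times L^2(0,T;H)$, proving the observability inequality
\[
\norm{(z_T,g,w,f)}\ \leq\ C\bigl(\norm{B^*z+g}_{L^2(0,T;U)}+\norm{f+w}_{L^2(0,T;H)}\bigr)
\]
for $z$ solving $z'+A^*z=f$, $z(T)=z_T$. The extra variable $f$ then acts as a Lagrange multiplier for the full trajectory $y$ (the Euler--Lagrange equations yield $y=F+W+w_*$ directly), and this same four--variable functional is reused verbatim for the approximate and null controllability theorems. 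Your route eliminates $f$ by substituting the source $w$ (or $-w$) into the adjoint from the outset and proving the tighter three--variable estimate \eqref{CoercivityTarget}; this is perfectly legitimate and arguably cleaner for the exact controllability result in isolation, but you should be aware that the sign convention matters: the duality identity forces the adjoint source to be $-w$ (not $w$) if you want $\Lambda^*(z_T,g,w)=B^*z+g$, which is harmless since $\W$ is a vector space. Your compactness argument (strong convergence of $g_n,w_n$ by finite dimension, then upgrading $z_T^n$ via \eqref{Obs-z-T} after splitting off the strongly convergent inhomogeneous part) is exactly the mechanism the paper uses in its proof of the four--variable inequality, so the core analytic step is the same.
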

The proof of Theorem \ref{Thm-ExactCont} is given in Section \ref{Sec-ExactCont}. 
Note that Theorem \ref{Thm-ExactCont} requires not only the unique continuation property \eqref{UC}, but also the observability property \eqref{Obs-z-T} for solutions of \eqref{Adjoint-Hom}. This is expected, as the usual exact controllability property (that is, the problem of, for any $y_0$ and $y_1$ in $H$, finding a control function $u \in L^2(0,T; U)$ such that the solution $y$ of \eqref{ControlledEq} satisfies \eqref{ExactCont-Req}) is equivalent to the observability property \eqref{Obs-z-T}. Here, since we would like to further impose some projections of $u$ and $y$, we should further assume the unique continuation property  \eqref{UC}, similarly as in Theorem \ref{Thm-ApproxControl}. In fact, the proof of Theorem \ref{Thm-ExactCont} given in Section \ref{Sec-ExactCont} mainly boils down to the proof of the following observability inequality (see Lemma \ref{Lem-Obs-z-T-General} and its proof in Section \ref{Subsec-Lem-Obs-z-T}): there exists $C>0$ such that for all $(z_T, g, w, f) \in H \times \G \times \W \times L^2(0,T; H)$, the solution $z$ of 
\begin{equation}
	\label{Adjoint-NonHom}
	z' + A^* z = f, \quad \hbox{ for } t \in (0,T), \quad \text{ with } \quad z(T) = z_T
\end{equation}
satisfies
\begin{equation}
	\label{Obs-z-T-General}
	\| ( z_T, g, w, f)\|_{H \times \G \times \W \times L^2(0,T; H)}
	\leq 
	C \left( \norm{ B^* z + g}_{L^2(0,T; U)} + \norm{ f + w}_{L^2(0,T; H)} \right).
\end{equation}
This is actually at this step that we strongly use the fact that the vector spaces $\G$ and $\W$ are of finite dimension, which allows to deduce the observability inequality \eqref{Obs-z-T-General} for solutions of \eqref{Adjoint-NonHom} from a compactness argument based on the unique continuation property \eqref{UC} and the observability inequality \eqref{Obs-z-T} for solutions of \eqref{Adjoint-Hom}. In fact, if we consider vector spaces $\G$ and $\W$ of possibly infinite dimension, our proof of Theorem \ref{Thm-ExactCont} yields the following result, whose detailed proof is left to the reader as it is a verbatim copy of Section \ref{Subsec-Strategy-ExactCont}:
\begin{corollary}
	\label{Cor-ExactCont-Infinite}
	Let the hypotheses (H1)--(H4) be satisfied, and let $T>0$, and assume the observability inequality \eqref{Obs-z-T-General} for solutions of \eqref{Adjoint-NonHom}. 
	\\
	Then, for any $y_0$ and $y_1$ in $H$, $g_* \in \G$, and $w_* \in \W$, there exists a control function $u \in L^2(0,T; U)$ such that \eqref{Moments-Of-U-in-G} holds, the solution $y$ of \eqref{ControlledEq} satisfies \eqref{ExactCont-Req} and the condition \eqref{Moments-Of-Y-in-W}.
\end{corollary}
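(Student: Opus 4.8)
The plan is to realize the control $u$ as a minimizer of a quadratic functional over the class of admissible adjoint data, in the spirit of the Hilbert Uniqueness Method, and to read off the constraints from the Euler--Lagrange equations. Concretely, I would work on the Hilbert space $\mathcal{H} := H \times \G \times \W \times L^2(0,T;H)$, and for $(z_T, g, w, f) \in \mathcal{H}$ let $z$ be the solution of the adjoint equation \eqref{Adjoint-NonHom}. Fixing the target $y_1 \in H$, the data $y_0 \in H$, and the prescribed projections $g_* \in \G$, $w_* \in \W$, I would introduce the functional
\begin{equation}
	\label{plan-functional}
	J(z_T, g, w, f) = \frac12 \norm{B^* z + g}_{L^2(0,T;U)}^2 + \frac12 \norm{f + w}_{L^2(0,T;H)}^2
	- \langle y_1, z_T \rangle_H + \langle y_0, z(0) \rangle_H + \int_0^T \langle g_*, g \rangle_U \, dt + \int_0^T \langle w_*, w \rangle_H \, dt .
\end{equation}
The assumed observability inequality \eqref{Obs-z-T-General} says precisely that the quadratic part of $J$ is coercive on $\mathcal{H}$, so since $J$ is also strictly convex, continuous, and the linear part is bounded (using the continuity of the data-to-solution map $(z_T,f)\mapsto z$ from $\mathcal{H}$ to $\mathscr{C}^0([0,T];H)$ granted by (H1)), $J$ admits a unique minimizer $(\widehat z_T, \widehat g, \widehat w, \widehat f)$, with associated adjoint state $\widehat z$.

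The second step is to write the Euler--Lagrange equation $DJ(\widehat z_T, \widehat g, \widehat w, \widehat f) \cdot (\delta z_T, \delta g, \delta w, \delta f) = 0$ for all test data $(\delta z_T, \delta g, \delta w, \delta f) \in \mathcal{H}$, where the associated $\delta z$ solves $\delta z' + A^* \delta z = \delta f$ with $\delta z(T) = \delta z_T$. Setting $u := -(B^* \widehat z + \widehat g) \in L^2(0,T;U)$ and defining $y$ as the solution of \eqref{ControlledEq} with this $u$ starting from $y_0$, the standard duality/integration-by-parts identity gives
\begin{equation}
	\label{plan-duality}
	\langle y(T), \delta z_T \rangle_H - \langle y_0, \delta z(0) \rangle_H = \int_0^T \langle u, B^* \delta z \rangle_U \, dt + \int_0^T \langle y, \delta f \rangle_H \, dt .
\end{equation}
I would then substitute this into the vanishing of $DJ$, take the free parameters one family at a time: varying $\delta z_T$ (with $\delta g = 0$, $\delta w = 0$, $\delta f = 0$, so $\delta z$ is the homogeneous adjoint state with final datum $\delta z_T$) forces $y(T) = y_1$, i.e. \eqref{ExactCont-Req}; varying $\delta g \in \G$ (with the other parameters zero) and noting that $B^*\widehat z + \widehat g = -u$ enters against $\delta g$ together with the term $\langle g_*, \delta g\rangle$, forces $\P_\G u = g_*$, i.e. \eqref{Moments-Of-U-in-G}; and varying $\delta w \in \W$ similarly, together with the $\langle y, \delta f\rangle$ contribution produced by the coupling $\widehat f + \widehat w$, forces $\P_\W y = w_*$, i.e. \eqref{Moments-Of-Y-in-W}. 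The variation with respect to $\delta f \in L^2(0,T;H)$ is the one that self-consistently ties $\widehat f$ to $y$ and makes the previous computations close up.

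The main obstacle is the bookkeeping in this last step: the four variations are coupled because perturbing $\delta f$ changes $\delta z$, which in turn appears in the $B^*\delta z$ term, so one has to carefully track which combination of the relations $B^*\widehat z + \widehat g$ and $\widehat f + \widehat w$ is being tested and confirm that the pairing structure indeed produces exactly the orthogonal projections $\P_\G$ and $\P_\W$ and not merely some weaker linear relations. The key algebraic point is that $g_* \in \G$ and $w_* \in \W$, so that testing against $\delta g \in \G$ sees $g_* - \P_\G u$ rather than $g_* - u$, and likewise for $\W$; once this is set up correctly, everything falls out of \eqref{plan-duality}. Since the statement explicitly allows us to copy the argument of Section~\ref{Subsec-Strategy-ExactCont} verbatim, and the only hypothesis invoked is \eqref{Obs-z-T-General} itself (no finite-dimensionality and no separate appeal to \eqref{UC} or \eqref{Obs-z-T}), the coercivity in \eqref{plan-functional} is the whole analytic input and the rest is the duality computation sketched above.
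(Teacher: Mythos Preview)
Your approach is exactly the paper's: minimize a quadratic functional on $H\times\G\times\W\times L^2(0,T;H)$, use \eqref{Obs-z-T-General} for coercivity, and read off the control from the Euler--Lagrange equations as in Lemma~\ref{Lem-Min-Of-J-Controls}. However, the specific functional \eqref{plan-functional} together with the choice $u:=-(B^*\widehat z+\widehat g)$ contains a sign inconsistency that prevents the argument from closing as you describe. Testing the Euler--Lagrange equation against $(\delta z_T,0,0,0)$ and inserting your duality identity \eqref{plan-duality} (with $\delta f=0$) gives
\[
-\langle y(T)+y_1,\delta z_T\rangle_H + 2\langle y_0,\delta z(0)\rangle_H = 0,
\]
which does \emph{not} force $y(T)=y_1$. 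If instead you drop the minus sign and take $u:=B^*\widehat z+\widehat g$, then the variation in $(\delta z_T,\delta f)$ correctly yields $y(T)=y_1$ and $y=\widehat f+\widehat w$, but now the $\delta g$-variation gives $\langle u+g_*,\delta g\rangle_U=0$, i.e.\ $\P_\G u=-g_*$, and similarly $\P_\W y=-w_*$.

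The paper avoids this by choosing the linear terms as $\int_0^T\langle B^*z,g_*\rangle_U\,dt+\int_0^T\langle f,w_*\rangle_H\,dt$ (see \eqref{Functional-Exact-Cont}) rather than $\langle g_*,g\rangle$ and $\langle w_*,w\rangle$, and then setting $u=B^*Z+G+g_*$, $y=F+W+w_*$; the $\delta g$- and $\delta w$-variations then give $\P_\G(B^*Z+G)=0$ and $\P_\W(F+W)=0$, hence $\P_\G u=g_*$ and $\P_\W y=w_*$ directly. Equivalently, you can keep your variables but flip the signs of the $g_*$ and $w_*$ terms in \eqref{plan-functional}. Once this is corrected, your sketch is a verbatim copy of Section~\ref{Subsec-Strategy-ExactCont}, as the paper itself indicates.
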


Similarly, when considering null-controllability with linear projection constraints, one should rely on some kind of observability properties for solutions of \eqref{Adjoint-NonHom}:

\begin{theorem}[Null controllability with linear projection constraints]
	\label{Thm-NullCont}
	Let the hypotheses (H1)--(H4) be satisfied, and let $T>0$.
	\\
	We further assume the following observability inequality: there exists a constant $C>0$ such that for all $(z_T, g, w, f) \in H \times \G \times \W \times L^2(0,T; H)$, the solution $z$ of  \eqref{Adjoint-NonHom} satisfies 
	\begin{equation}
		\label{Obs-z-0-General}
		\| ( z(0), g, w, f)\|_{H \times \G \times \W \times L^2(0,T; H)}
		\leq 
		C \left( \norm{ B^* z + g}_{L^2(0,T; U)} + \norm{ f + w}_{L^2(0,T; H)} \right).
	\end{equation}
	Then, for any $y_0$ in $H$, $g_* \in \G$, and $w_* \in \W$, there exists a control function $u \in L^2(0,T; U)$ such that \eqref{Moments-Of-U-in-G} holds, the solution $y$ of \eqref{ControlledEq} satisfies \eqref{NullCont-Req} and the condition \eqref{Moments-Of-Y-in-W}.
	\\
	In other words, one can solve the null controllability problem \eqref{NullCont-Req} and exactly satisfy the constraints \eqref{Moments-Of-U-in-G} on $u$ and \eqref{Moments-Of-Y-in-W} on $y$.
\end{theorem}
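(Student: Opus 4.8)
The plan is to reduce Theorem~\ref{Thm-NullCont} to a duality/functional-analytic argument analogous to the classical equivalence between null-controllability and observability, suitably adapted to the presence of the projection constraints. First I would reformulate the problem as follows: we seek $u\in L^2(0,T;U)$ with $\P_\G u = g_*$ such that the solution $y$ of \eqref{ControlledEq} with $y(0)=y_0$ satisfies $y(T)=0$ and $\P_\W y = w_*$. Writing $u = g_* + v$ with $v\in\G^\perp$ (the orthogonal complement of $\G$ in $L^2(0,T;U)$), the constraint $\P_\G u = g_*$ becomes simply $v\in\G^\perp$, and the pair of requirements $y(T)=0$, $\P_\W y = w_*$ becomes an affine condition on $v$ (after subtracting off the contribution of $y_0$ and of the fixed part $g_*$ of the control, via the variation-of-constants formula). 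So the task is to hit a prescribed element of $H\times\W$ by the linear map $v\mapsto (\widetilde y(T),\P_\W\widetilde y)$, where $\widetilde y$ solves \eqref{ControlledEq} with $y_0=0$ and control $v\in\G^\perp$.

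Next I would set up the dual problem. Surjectivity of the above bounded linear operator $\Lambda\colon \G^\perp \to H\times\W$ onto the relevant affine target is equivalent, by the closed range theorem (or directly by the standard ``observability implies controllability'' lemma, e.g.\ as in \cite{TWBook}), to a coercivity estimate for the adjoint $\Lambda^*$. Computing $\Lambda^*$ via the duality pairing $\langle y(T),z_T\rangle_H - \langle y_0,z(0)\rangle_H - \int_0^T\langle y,w\rangle_H\,dt - \int_0^T\langle u,g\rangle_U\,dt = 0$ — exactly the identity displayed in the discussion after Theorem~\ref{Thm-ApproxControl} — one finds that $\Lambda^*$ acts on $(z_T,w)\in H\times\W$ by sending it to the pair $\big(\P_{\G^\perp}(B^*z)\,,\,(\text{the }\W\text{-component coming from the }\int\langle y,w\rangle\text{ term})\big)$, where $z$ solves the adjoint equation $z' + A^*z = $ (the appropriate source built from $w$) with terminal data $z_T$. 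More precisely, I expect the right way to organize it is to dualize simultaneously against $(z_T,w)$ and to recognize that testing against $v\in\G^\perp$ only sees $\P_{\G^\perp}(B^*z) = B^*z - \P_\G(B^*z)$, so one is naturally led to introduce $g := -\P_\G(B^*z)\in\G$ and to the quantity $\|B^*z+g\|_{L^2(0,T;U)} = \|\P_{\G^\perp}B^*z\|$; similarly the $\W$-projection of $y$ is dual to adding a source $f$ with $f+w$ supported off $\W$, giving the quantity $\|f+w\|_{L^2(0,T;H)}$. Thus the coercivity estimate one needs for $\Lambda^*$ is precisely
\begin{equation*}
	\|z(0)\|_H + \|g\|_\G + \|w\|_\W + \|f\|_{L^2(0,T;H)} \leq C\big(\|B^*z+g\|_{L^2(0,T;U)} + \|f+w\|_{L^2(0,T;H)}\big),
\end{equation*}
which is exactly the assumed observability inequality \eqref{Obs-z-0-General} for solutions of \eqref{Adjoint-NonHom}. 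Here the fact that we ask $y(T)=0$ (rather than $y(T)=y_1$) is what makes $z(0)$, rather than $z_T$, the relevant norm on the left, mirroring the classical null-controllability/observability duality.

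With this identification in place, the proof concludes by the standard argument: define on $H\times\W$ (or on the appropriate space of dual variables $(z_T,w)$, equivalently encoded through $(g,w,f)$ subject to the adjoint dynamics) the quadratic functional $J$ whose first-order optimality condition produces the desired control, minimize it using that \eqref{Obs-z-0-General} gives coercivity and weak lower semicontinuity, and read off the control $v$ from the minimizer; then $u=g_*+v$. I would either invoke the abstract lemma relating an a priori inequality $\|\Lambda^*\varphi\|\geq c\|\varphi\|$ to the solvability of $\Lambda v = \chi$ for every $\chi$, or run the Fenchel–Rockafellar / penalized-HUM argument directly. The one genuine subtlety — and the step I'd be most careful about — is the bookkeeping of the duality: correctly checking that the adjoint of the constrained control map is the map $(z_T,w)\mapsto(\P_{\G^\perp}B^*z, \cdot)$ and that the norm appearing on the right-hand side of the dual estimate is genuinely $\|B^*z+g\| + \|f+w\|$ with $g\in\G$, $f$ free in $L^2(0,T;H)$; in other words, that the ``free'' source $f$ and the ``free'' $\G$-component $g$ in \eqref{Obs-z-0-General} are exactly the Lagrange-multiplier slack corresponding to the fact that we only constrain $\P_\G u$ and $\P_\W y$, not $u$ and $y$ themselves. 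Once that correspondence is pinned down, no compactness or finite-dimensionality of $\G,\W$ is needed — unlike in Theorem~\ref{Thm-ExactCont}, the hypothesis \eqref{Obs-z-0-General} is assumed outright — so the proof is a clean duality argument with no further analytic obstacle. Finally, $\dim\W<\infty$ (hence $\W$ closed and $\P_\W$ well behaved) is already guaranteed by (H4), and everything goes through in the Hilbert space setting of (H1)–(H2).
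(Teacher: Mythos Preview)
Your approach is essentially the paper's: set up a quadratic functional on the adjoint variables $(z_T,g,w,f)$ and read the control off the Euler--Lagrange equation. The functional you would arrive at is exactly the paper's $J_{\text{nu}}$.

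However, you gloss over the one genuine technical point. Your closed-range/coercivity sketch needs $\|\Lambda^*(z_T,w)\|\geq c\|(z_T,w)\|_{H\times\W}$, i.e.\ a bound in terms of $\|z_T\|_H$. But \eqref{Obs-z-0-General} only controls $\|z(0)\|_H$, not $\|z_T\|_H$ --- you even note this (``$z(0)$, rather than $z_T$, the relevant norm'') without drawing the consequence: the functional is \emph{not} coercive on $H\times\G\times\W\times L^2(0,T;H)$ in its natural norm, and the set
\[
N=\{(z_T,g,w,f):\ B^*z+g=0,\ f+w=0\}=\{(z_T,0,0,0):\ B^*z=0 \text{ on }(0,T),\ z(0)=0\}
\]
can be nontrivial (for null- but not exactly controllable systems, $B^*z\equiv 0$ and $z(0)=0$ do not force $z_T=0$). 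The paper deals with this by quotienting out $N$, endowing the quotient $X_0$ with the observability norm $\|\cdot\|_{obs}^2=\|B^*z+g\|^2+\|f+w\|^2$, and \emph{completing} to a Hilbert space $X_{obs}$; \eqref{Obs-z-0-General} is precisely what makes the linear terms in $J_{\text{nu}}$ (which involve $z(0)$, $B^*z$, $f$) extend continuously to $X_{obs}$, after which the minimization is routine. Your penalized-HUM alternative would also work, but still requires identifying the limiting space or arguing through weak compactness of the control sequence --- the completion step cannot be skipped. (Small slip at the end: (H4) only asserts $\W$ is closed, not finite-dimensional; no dimension hypothesis is needed here, unlike in Theorem~\ref{Thm-ExactCont}.)
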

The proof of Theorem \ref{Thm-NullCont} is given in Section \ref{Subsec-Proof-NullCont}, and is quite similar to the one of Corollary \ref{Cor-ExactCont-Infinite}. 

Let us point out that Theorem \ref{Thm-NullCont} relies on the observability property \eqref{Obs-z-0-General} for solutions of \eqref{Adjoint-NonHom}, which is the counterpart of the observability property \eqref{Obs-z-T-General} for Theorem \ref{Thm-ExactCont}. Still, as in the case of exact controllability, one could ask if the observability inequality \eqref{Obs-z-0-General} for solutions of \eqref{Adjoint-NonHom} could be derived from the observability inequality which is equivalent to null-controllability, namely the following one: there exists a constant $C>0$ such that for all solutions $z$ of \eqref{Adjoint-Hom} with $z_T \in H$, 
\begin{equation}
	\label{Obs-z-0-Usual}
	\norm{z(0)}_H \leq C \norm{ B^* z}_{L^2(0,T; U)}.
\end{equation}
It is not clear whether or not the observability inequality \eqref{Obs-z-0-General} for solutions of \eqref{Adjoint-NonHom} can be derived from the observability inequality \eqref{Obs-z-0-Usual} for solutions of \eqref{Adjoint-Hom} and the unique continuation property \eqref{UC}. In fact, using a compactness argument, we only managed to obtain the following result, proved in Section \ref{Subsec-Proof-Prop}:
\begin{proposition}
	\label{Prop-From-Usual-Obs-z0-to-Obs-z0-Gal}
	Let the hypotheses (H1)--(H4) be satisfied, and let $T>0$, and assume that the vector spaces $\G$ and $\W$ are of finite dimension.
	
	We further assume that there exists $\tilde T \in (0,T]$ such that
	\begin{equation}
	\label{UC-Tilde}
	\hbox{if } z \hbox{ satisfies }
	\left\{ 
		\begin{array}{ll}
		 z' + A^* z = w, \quad & \hspace{-2cm}\hbox{ for } t \in (0,\tilde T),
		\\
		z(\tilde T) = z_{\tilde T}, 
		\\
		B^* z = g , \quad &\hspace{-2cm}\hbox{ for } t \in (0,\tilde T),
		\\
		\hbox{ with } (z_{\tilde T}, g, w) \in H \times \G \times \W,
		\end{array} 
	\right.	
	\quad \hbox{ then } \quad 
	\left\{ 
		\begin{array}{l}
		z_{\tilde T} = 0,
		\\
		g = 0, 
		\\
		w = 0,
		\end{array} 
	\right.	
	\end{equation}
	and such that there exists a constant $C$ such that any solution $z$ of \eqref{Adjoint-Hom} with $z_T \in H$ satisfies
	\begin{equation}
		\label{Obs-z-tilde-T}
		\norm{z(\tilde T)}_H \leq C \norm{B^*z}_{L^2(0,T; U)}.
	\end{equation}
	Then the observability inequality \eqref{Obs-z-0-General} holds for all solutions $z$ of \eqref{Adjoint-NonHom} with $(z_T, g, w, f) \in H \times \G \times \W \times L^2(0,T; H)$.
\end{proposition}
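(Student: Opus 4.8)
The plan is to argue by contradiction and compactness, after first reducing the observability estimate at time $0$ to the analogous estimate at time $\tilde T$.

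\textbf{Reduction.} If $z$ solves \eqref{Adjoint-NonHom}, then on $[0,\tilde T]$ it solves the same equation with terminal datum $z(\tilde T)$, so the variation of constants formula for the $C_0$ semigroup generated by $A^*$ gives $\norm{z(0)}_H \leq C(\norm{z(\tilde T)}_H + \norm{f}_{L^2(0,T;H)})$. Hence it suffices to prove the intermediate inequality $\norm{(z(\tilde T),g,w,f)}_{H\times\G\times\W\times L^2(0,T;H)} \leq C(\norm{B^*z+g}_{L^2(0,T;U)} + \norm{f+w}_{L^2(0,T;H)})$ for all $(z_T,g,w,f)\in H\times\G\times\W\times L^2(0,T;H)$ and $z$ the corresponding solution of \eqref{Adjoint-NonHom}; indeed \eqref{Obs-z-0-General} then follows by estimating $\norm{z(0)}_H$ as above and $\norm{g}_\G+\norm{w}_\W+\norm{f}_{L^2(0,T;H)}$ by the left-hand side of the intermediate inequality.

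\textbf{The contradiction sequence.} If the intermediate inequality fails, there is a sequence $(z_T^n,g^n,w^n,f^n)$ with associated solutions $z^n$ of \eqref{Adjoint-NonHom}, normalized so that $\norm{(z^n(\tilde T),g^n,w^n,f^n)} = 1$, and such that $\norm{B^*z^n+g^n}_{L^2(0,T;U)} + \norm{f^n+w^n}_{L^2(0,T;H)} \to 0$. Since $\G$ and $\W$ are finite dimensional and $(g^n)$, $(w^n)$ are bounded, up to extraction $g^n\to g$ in $\G$ and $w^n\to w$ in $\W$ strongly; consequently $f^n\to f:=-w$ strongly in $L^2(0,T;H)$ and $B^*z^n\to -g$ strongly in $L^2(0,T;U)$.

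\textbf{Strong convergence of $z^n(\tilde T)$.} Split $z^n=\zeta^n+\eta^n$, where $\eta^n$ solves $\eta'+A^*\eta=f^n$ with $\eta^n(T)=0$, and $\zeta^n$ solves \eqref{Adjoint-Hom} with $\zeta^n(T)=z_T^n$. By Duhamel's formula $\norm{\eta^n}_{\mathscr{C}^0([0,T];H)}\leq C\norm{f^n}_{L^2(0,T;H)}$, and by linearity $\eta^n\to\eta^\infty$ strongly in $\mathscr{C}^0([0,T];H)$; hence $B^*\zeta^n=B^*z^n-B^*\eta^n$ converges strongly in $L^2(0,T;U)$. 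Applying the observability inequality \eqref{Obs-z-tilde-T} to the homogeneous solutions $\zeta^n-\zeta^m$ yields $\norm{\zeta^n(\tilde T)-\zeta^m(\tilde T)}_H\leq C\norm{B^*\zeta^n-B^*\zeta^m}_{L^2(0,T;U)}$, so $(\zeta^n(\tilde T))$ is Cauchy in $H$ and converges strongly; therefore $z^n(\tilde T)=\zeta^n(\tilde T)+\eta^n(\tilde T)$ converges strongly in $H$. Using the variation of constants formula on $[0,\tilde T]$ together with the strong convergences of $z^n(\tilde T)$ and of $f^n$, we obtain $z^n\to z^\infty$ strongly in $\mathscr{C}^0([0,\tilde T];H)$, where $z^\infty$ solves $z'+A^*z=-w$ on $(0,\tilde T)$ with $z^\infty(\tilde T)=\lim z^n(\tilde T)$, and $B^*z^\infty=-g$ on $(0,\tilde T)$.

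\textbf{Conclusion and main obstacle.} Since $\G$ and $\W$ are vector spaces, $-g\in\G$ and $-w\in\W$, so $z^\infty$ satisfies the hypotheses of \eqref{UC-Tilde}; hence $z^\infty(\tilde T)=0$, $g=0$, $w=0$, and therefore $f=0$. Then $\norm{g^n}_\G+\norm{w^n}_\W+\norm{f^n}_{L^2(0,T;H)}\to 0$ while $\norm{z^n(\tilde T)}_H\to\norm{z^\infty(\tilde T)}_H=0$, contradicting $\norm{(z^n(\tilde T),g^n,w^n,f^n)}=1$. This proves the intermediate inequality, and hence \eqref{Obs-z-0-General}. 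I expect the delicate step to be precisely the upgrade of the a priori only weak convergence of $z^n(\tilde T)$ (which is the quantity entering the normalization) to strong convergence: this forces one to isolate the homogeneous component $\zeta^n$, the only piece to which \eqref{Obs-z-tilde-T} applies, and to check that the inhomogeneous Duhamel component $\eta^n$ and the data $g^n,w^n$ converge strongly — and it is exactly here that the finite-dimensionality of $\G$ and $\W$ is used in an essential way.
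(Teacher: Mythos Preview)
Your proof is correct and follows essentially the same compactness/contradiction strategy as the paper. The only organizational differences are that you first reduce to an intermediate inequality at time $\tilde T$ (the paper instead normalizes directly at time $0$ and bounds $z_n(\tilde T)$ via an auxiliary estimate derived from \eqref{Obs-z-tilde-T}), and that you obtain strong convergence of $z^n(\tilde T)$ by a Cauchy argument on the homogeneous component, whereas the paper first passes to a weak limit, applies \eqref{UC-Tilde} to identify it as zero, and only then upgrades to strong convergence a posteriori.
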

\begin{remark}
	We emphasize that elements of $\G$ and $\W$ are defined on the whole time interval $(0,T)$, so that the conclusion of \eqref{UC-Tilde} has to be understood as $g = 0$ and $w = 0$ in the whole time interval $(0,T)$. Therefore, for condition \eqref{UC-Tilde} to be satisfied, the sets $\G$ and $\W$ should satisfy the following conditions: 
	\begin{equation*}
		\forall g \in \G, \, g_{|(0,\tilde T)} = 0 \ \Rightarrow \ g = 0, 
		\quad \hbox{ and } \quad 
		\forall w \in \W, \, w_{|(0,\tilde T)} = 0 \ \Rightarrow \ w = 0.
	\end{equation*}
\end{remark}
Roughly speaking, Proposition \ref{Prop-From-Usual-Obs-z0-to-Obs-z0-Gal} reduces the proof of the observability inequality \eqref{Obs-z-0-General} to the existence of an intermediate time $\tilde T$ such that the unique continuation property \eqref{UC-Tilde} holds and the observability inequality \eqref{Obs-z-tilde-T} holds for solutions of \eqref{Adjoint-Hom}. Note that the unique continuation property \eqref{UC-Tilde} is slightly stronger than \eqref{UC} since the time $\tilde T$ is smaller than $T$. Similarly, the observability inequality \eqref{Obs-z-tilde-T} is slightly stronger than \eqref{Obs-z-0-Usual} since $\tilde T >0$. Also note that, if the assumptions of Proposition \ref{Prop-From-Usual-Obs-z0-to-Obs-z0-Gal} holds for $\tilde T = T$, we are in fact in the setting of Theorem \ref{Thm-ExactCont}, so that one can solve the exact controllability problem with linear projection constraints, and therefore the null-controllability problem as well. 

From the above discussions, it is clear that what plays a key role in our analysis is the unique continuation property \eqref{UC}. We shall thus provide some examples in which it can be checked, see Section \ref{Sec-Examples}.

Let us finally mention that, in the cases $\G= \{0\}$ and $\W = \{0\}$, the assumption \eqref{UC} in Theorem \ref{Thm-ApproxControl} is a necessary and sufficient condition for the approximate controllability of \eqref{ControlledEq}; similarly, the observability condition \eqref{Obs-z-T} for solutions of \eqref{Adjoint-Hom} in Theorem \ref{Thm-ExactCont} is a necessary and sufficient condition for the exact controllability of \eqref{ControlledEq}, and the observability condition \eqref{Obs-z-0-General} for solutions of \eqref{Adjoint-NonHom} in Theorem \ref{Thm-NullCont} also is a necessary and sufficient condition for the null controllability of \eqref{ControlledEq}. We refer, for instance, to the textbook \cite[Theorem 11.2.1]{TWBook} for the proof of these results.

\medskip

{\bf Outline.} Section \ref{Sec-Proof-Approx} analyzing the approximate controllability problem \eqref{Moments-Of-U-in-G}--\eqref{ApproxCont-Req}--\eqref{Moments-Of-Y-in-W}--\eqref{Proj-Of-Y-T-in-E} provides the proof of Theorem \ref{Thm-ApproxControl}. Our result on exact controllability, namely Theorem \ref{Thm-ExactCont}, is proven in Section \ref{Sec-ExactCont}. Theorem \ref{Thm-NullCont} and Proposition \ref{Prop-From-Usual-Obs-z0-to-Obs-z0-Gal} discussing the null-controllability problem \eqref{Moments-Of-U-in-G}, \eqref{Moments-Of-Y-in-W} and \eqref{NullCont-Req} are then proved in Section \ref{Sec-NullCont}. In Section \ref{Sec-Examples}, we provide several PDE examples in which the crucial unique continuation property \eqref{UC} can be checked. Finally, we give some further comments and open problems in Section \ref{Sec-Further}.

\medskip

{\bf Acknowledgments. } This work benefited from various discussions with colleagues. The author is particularly indebted to Jean-Pierre Raymond for having pointed out the works \cite{Nakoulima-2004,Mophou-Nakoulima-2008,Mophou-Nakoulima-2009}, to J\'er\'emi Dard\'e, Pierre Lissy and Yannick Privat for their strong encouragements and comments, and the warmly atmosphere of the Pau Toulouse workshop in shape optimization, in which a first version of this work was presented. 

%
%
\section{Approximate controllability: Proof of Theorem \ref{Thm-ApproxControl}}
\label{Sec-Proof-Approx}

\subsection{Main steps of the proof of Theorem \ref{Thm-ApproxControl}} 

We assume (H1)--(H5), we take $T>0$ and assume the unique continuation property \eqref{UC}. We then set $(y_0, y_1) \in H^2$, $\varepsilon >0$, $g_* \in \G$ and $w_* \in \W$. 

The proof of Theorem \ref{Thm-ApproxControl} relies on the introduction of the functional 
\begin{multline}
	\label{Functional-Approx-Cont}
		J_{\text{ap}}(z_T, g,w , f )
		= 
		\frac{1}{2} \int_0^T \norm{ B^* z(t) + g(t) }_{U}^2 \, dt 
		+ 
		\frac{1}{2} \int_0^T \norm{ f(t) + w(t)}_H^2 \, dt
		+
		\langle y_0, z(0) \rangle_H - \langle y_1, z_T \rangle_H
		\\
		+ 
		\int_0^T \langle B^* z(t), g_*(t) \rangle_U \, dt
		+ 
		 \int_0^T \langle f(t), w_*(t) \rangle_H \, dt
		+ 
		\varepsilon \norm{ (I - \mathbb{P}_E) z_T}_H, 
\end{multline}
defined for 
$$
	(z_T, g,w , f ) \in H \times \G \times \W \times L^2(0,T; H),
$$
where $z$ denotes the solution of \eqref{Adjoint-NonHom}.

Namely, we shall distinguish two main steps. The first step consists in showing that $J_{\text{ap}}$ is coercive, and the second one in proving that the minimizer provides a solution to the control problem \eqref{Moments-Of-U-in-G}, \eqref{ApproxCont-Req}, \eqref{Moments-Of-Y-in-W} and \eqref{Proj-Of-Y-T-in-E}. The corresponding statements are given by the following lemmas, whose respective proofs are given in the section afterwards.
\begin{lemma}
	\label{Lem-Coercivity-of-J}
	The functional $J_{\text{ap}}$ is continuous, strictly convex and coercive in $ H \times \G \times \W \times L^2(0,T; H)$.
\end{lemma}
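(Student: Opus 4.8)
The plan is to establish the three properties — continuity, strict convexity, coercivity — in increasing order of difficulty, with coercivity being the only real work.

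\textit{Continuity.} The map $(z_T,g,w,f)\mapsto z$, where $z$ solves \eqref{Adjoint-NonHom}, is linear and bounded from $H\times\W\times L^2(0,T;H)$ (indeed only $z_T$ and $f$ enter the equation) into $\mathscr{C}^0([0,T];H)$, by the $C_0$-semigroup estimates under (H1); hence $z(0)$ and $B^*z\in L^2(0,T;U)$ depend continuously on the data by (H2). Each term of $J_{\text{ap}}$ is then a composition of this bounded linear map with a continuous function (a squared norm, a linear functional, or the norm $\|(I-\mathbb{P}_E)\cdot\|_H$), so $J_{\text{ap}}$ is continuous on $H\times\G\times\W\times L^2(0,T;H)$.

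\textit{Strict convexity.} The two quadratic terms $\frac12\int_0^T\|B^*z+g\|_U^2$ and $\frac12\int_0^T\|f+w\|_H^2$ are convex; the linear terms and the term $\varepsilon\|(I-\mathbb{P}_E)z_T\|_H$ are convex as well. For strict convexity it suffices that the sum of the two quadratic terms be strictly convex, i.e. that $(z_T,g,w,f)\neq 0$ forces $\|B^*z+g\|_{L^2}^2+\|f+w\|_{L^2}^2>0$. If both vanish, then $B^*z=-g\in\G$ (up to sign, $\G$ is a subspace so $-g\in\G$) and $f=-w\in\W$, so $z$ solves $z'+A^*z=-w$ with $B^*z=-g$; the unique continuation property \eqref{UC} then gives $z_T=0$, $g=0$, $w=0$, and consequently $f=-w=0$. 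Hence the quadratic part is positive definite on the whole space and $J_{\text{ap}}$ is strictly convex.

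\textit{Coercivity.} This is the main obstacle. I argue by contradiction: suppose there is a sequence $(z_T^n,g^n,w^n,f^n)$ with $N_n:=\|(z_T^n,g^n,w^n,f^n)\|\to\infty$ but $J_{\text{ap}}(z_T^n,g^n,w^n,f^n)\leq C$. Normalize: set $(\hat z_T^n,\hat g^n,\hat w^n,\hat f^n)=(z_T^n,g^n,w^n,f^n)/N_n$, a unit vector, and let $\hat z^n$ be the corresponding solution. Dividing $J_{\text{ap}}$ by $N_n^2$ and using $J_{\text{ap}}\leq C$, the quadratic terms give
\begin{equation*}
	\frac12\int_0^T\|B^*\hat z^n+\hat g^n\|_U^2\,dt+\frac12\int_0^T\|\hat f^n+\hat w^n\|_H^2\,dt\leq \frac{C}{N_n^2}+\frac{1}{N_n}\bigl(\text{bounded linear terms in the unit vector}\bigr)+\frac{\varepsilon}{N_n}\|(I-\mathbb{P}_E)\hat z_T^n\|_H,
\end{equation*}
and since the unit vectors keep all the linear and $\varepsilon$-terms bounded while $N_n\to\infty$, we conclude $\|B^*\hat z^n+\hat g^n\|_{L^2(0,T;U)}\to 0$ and $\|\hat f^n+\hat w^n\|_{L^2(0,T;H)}\to 0$. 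Here I invoke the crucial quantitative input: because $\G$ and $\W$ are finite dimensional, the observability-type inequality for the nonhomogeneous adjoint system — the analogue of \eqref{Obs-z-0-General}/\eqref{Obs-z-T-General} but with \emph{no} observation term, i.e. a pure unique-continuation-plus-compactness statement, giving $\|(z_T,g,w,f)\|\lesssim\|B^*z+g\|_{L^2}+\|f+w\|_{L^2}$ (proved exactly as Lemma \ref{Lem-Obs-z-T-General} from \eqref{UC} and a compactness argument, using finite-dimensionality of $\G,\W$ to extract strongly convergent subsequences of $g^n,w^n$) — forces $\|(\hat z_T^n,\hat g^n,\hat w^n,\hat f^n)\|\to 0$, contradicting that these are unit vectors. (If one prefers to avoid citing that inequality here, one runs the compactness argument inline: extract weak limits of $\hat z_T^n,\hat f^n$ in $H,L^2(0,T;H)$ and strong limits of $\hat g^n,\hat w^n$ in the finite-dimensional spaces $\G,\W$; the limiting data $(\bar z_T,\bar g,\bar w,\bar f)$ satisfy $B^*\bar z+\bar g=0$ and $\bar f+\bar w=0$, hence by \eqref{UC} vanish; then upgrade the weak convergence of $\hat z_T^n$ to strong convergence using that $\|(I-\mathbb{P}_E)\hat z_T^n\|_H$ together with $\|\mathbb{P}_E\hat z_T^n\|_H$ — the latter controlled in the finite-dimensional $E$ via the equation — and the vanishing of the observation terms pin down the norm, and similarly for $\hat f^n$; this contradicts $\|(\hat z_T^n,\hat g^n,\hat w^n,\hat f^n)\|=1$.) Therefore such a sequence cannot exist, and $J_{\text{ap}}$ is coercive. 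The delicate point throughout is that only the combinations $B^*z+g$ and $f+w$ are ``observed'' by $J_{\text{ap}}$, so separating the individual components $z_T,g,w,f$ genuinely requires \eqref{UC} together with the finite dimensionality of $\W$ (and, for the analogous exact/null statements, of $\G$).
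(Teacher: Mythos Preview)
Your continuity and strict convexity arguments are fine and match the paper. The coercivity argument, however, has a genuine gap.

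Your primary route is to invoke an inequality of the form
\[
\|(z_T,g,w,f)\|_{H\times\G\times\W\times L^2(0,T;H)} \leq C\bigl(\|B^*z+g\|_{L^2(0,T;U)}+\|f+w\|_{L^2(0,T;H)}\bigr),
\]
claimed to follow from \eqref{UC} and compactness. This inequality is \emph{false} in the present setting: taking $g=0$, $w=0$, $f=0$ it would read $\|z_T\|_H\leq C\|B^*z\|_{L^2(0,T;U)}$, i.e.\ the exact observability estimate \eqref{Obs-z-T}, which is \emph{not} assumed in Theorem~\ref{Thm-ApproxControl}. (In the paper, that inequality is precisely Lemma~\ref{Lem-Obs-z-T-General}, and its proof uses \eqref{Obs-z-T} as an input.) Unique continuation plus compactness cannot manufacture an observability bound on the infinite-dimensional $H$-component.

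Your parenthetical ``inline'' argument is closer, but it has two problems. First, you assume $\G$ is finite dimensional to extract a strongly convergent subsequence of $\hat g^n$; in Theorem~\ref{Thm-ApproxControl} only $\W$ is assumed finite dimensional, so $\hat g^n$ converges only weakly at this stage. Second, and more importantly, you do not explain how to obtain $\|(I-\mathbb{P}_E)\hat z_T^n\|_H\to 0$; the vanishing of $\|B^*\hat z^n+\hat g^n\|$ and $\|\hat f^n+\hat w^n\|$ does not by itself force this. The missing mechanism, which the paper uses, is to go back to the bound $J_{\text{ap}}\leq S$ and divide by $N_n$ (not $N_n^2$): since the quadratic part is nonnegative and the linear terms $\langle y_0,\hat z^n(0)\rangle$, $\langle y_1,\hat z_T^n\rangle$, $\int\langle B^*\hat z^n,g_*\rangle$, $\int\langle\hat f^n,w_*\rangle$ all tend to $0$ (by the weak convergences to $0$ already established), one is left with $\varepsilon\|(I-\mathbb{P}_E)\hat z_T^n\|_H\to 0$. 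Combined with $\mathbb{P}_E\hat z_T^n\to 0$ strongly ($E$ finite dimensional), this gives $\hat z_T^n\to 0$ strongly in $H$. From there the chain is: $\hat w^n\to 0$ strongly (finite dimension of $\W$) and $\|\hat f^n+\hat w^n\|\to 0$ give $\hat f^n\to 0$ strongly; then $\hat z^n\to 0$ strongly in $L^2(0,T;H)$, hence $B^*\hat z^n\to 0$ strongly, hence $\hat g^n\to 0$ strongly --- contradicting $\|(\hat z_T^n,\hat g^n,\hat w^n,\hat f^n)\|=1$. Note that strong convergence of $\hat g^n$ comes out at the \emph{end}, without any finite-dimensionality assumption on $\G$.
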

Of course, based on Lemma \ref{Lem-Coercivity-of-J}, the functional $J_{\text{ap}}$ admits a unique minimizer $(Z_T, G, W, F)$ in $ H \times \G \times \W \times L^2(0,T; H)$, which enjoys some nice properties given in the lemma below.
\begin{lemma}
	\label{Lem-Min-Of-J-Controls}
	Let $(Z_T, G, W, F)$ denote the unique minimizer of $J_{\text{ap}}$ in $ H \times \G \times \W \times L^2(0,T; H)$. Setting $Z$ the corresponding solution of 
	\begin{equation}
		\label{Z-Corr-to-Min}
		Z' + A^* Z = F, \quad t \in (0,T) \quad \text{ with } \quad Z(T) = Z_T, 
	\end{equation}
	the functions $y$ and $u$ defined by 
	\begin{equation}
		\label{Dictionary-y-u-Z}
		y = F + W + w_*, \quad \hbox{ in } (0,T), \qquad u = B^* Z + G + g_*, \quad \hbox{ in } (0,T),
	\end{equation}
	satisfy the equation \eqref{ControlledEq}, and the conditions \eqref{Moments-Of-U-in-G}, \eqref{ApproxCont-Req}, \eqref{Moments-Of-Y-in-W} and \eqref{Proj-Of-Y-T-in-E}.
\end{lemma}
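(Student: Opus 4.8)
The plan is to exploit the Euler--Lagrange equations satisfied by the minimizer $(Z_T, G, W, F)$ and to reinterpret them, via the duality pairing between \eqref{ControlledEq} and \eqref{Adjoint-NonHom}, as the desired control and projection identities. First I would write down the duality identity: for any $(z_T, g, w, f) \in H \times \G \times \W \times L^2(0,T; H)$ with associated solution $z$ of \eqref{Adjoint-NonHom}, and for $y$ the solution of \eqref{ControlledEq} with control $u$, one has
\begin{equation*}
	\langle y(T), z_T \rangle_H - \langle y_0, z(0) \rangle_H
	= \int_0^T \langle u(t), B^* z(t) \rangle_U \, dt
	+ \int_0^T \langle y(t), f(t) \rangle_H \, dt.
\end{equation*}
Using the definitions \eqref{Dictionary-y-u-Z} of $y$ and $u$ in terms of the minimizer, the right-hand side becomes an explicit bilinear-in-$(z_T,g,w,f)$ expression; I would then match it term by term against the directional derivative $DJ_{\text{ap}}(Z_T,G,W,F)[(z_T,g,w,f)]$ computed from \eqref{Functional-Approx-Cont}.

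The heart of the argument is to compute $DJ_{\text{ap}}$. Since $J_{\text{ap}}$ is the sum of two quadratic terms, three linear terms, and the (only) non-smooth term $\varepsilon \norm{(I-\P_E)z_T}_H$, the optimality condition reads: for all $(z_T,g,w,f)$,
\begin{multline*}
	\int_0^T \langle B^* Z + G, B^* z + g \rangle_U \, dt
	+ \int_0^T \langle F + W, f + w \rangle_H \, dt
	+ \langle y_0, z(0)\rangle_H - \langle y_1, z_T\rangle_H
	\\
	+ \int_0^T \langle B^* z, g_* \rangle_U \, dt
	+ \int_0^T \langle f, w_* \rangle_H \, dt
	+ \varepsilon \, \langle \xi, (I-\P_E) z_T \rangle_H = 0,
\end{multline*}
where $\xi \in H$ is a subgradient of the norm at $(I-\P_E)Z_T$, i.e. $\norm{\xi}_H \le 1$ (and $\xi = (I-\P_E)Z_T / \norm{(I-\P_E)Z_T}_H$ when the latter is nonzero). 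Substituting $y = F+W+w_*$ and $u = B^*Z+G+g_*$ into the duality identity and subtracting, I obtain that for all test data
\begin{equation*}
	\langle y(T) - y_1 - \varepsilon(I-\P_E)\xi, \; z_T \rangle_H
	= \int_0^T \langle g_* - \P_\G u? \dots \rangle + \dots
\end{equation*}
— more precisely, the terms pair so that the coefficient of $g$ (a free element of $\G$) forces $\P_\G(u - g_*) = 0$, hence \eqref{Moments-Of-U-in-G}; the coefficient of $w$ (free in $\W$) forces $\P_\W(y - w_*) = 0$, hence \eqref{Moments-Of-Y-in-W}; the coefficient of $f$ (free in $L^2(0,T;H)$, acting through $z$) is already absorbed once the equation \eqref{ControlledEq} is verified, which I check directly from \eqref{Dictionary-y-u-Z} and \eqref{Z-Corr-to-Min}; and finally the coefficient of $z_T$ gives $y(T) - y_1 = \varepsilon (I-\P_E)\xi = \varepsilon \, \xi$, where the last equality uses $\P_E \xi = 0$ (since $\xi$ is a subgradient of a function depending only on $(I-\P_E)z_T$, it lies in $(I-\P_E)H = E^\perp$). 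From $\P_E\xi = 0$ one reads off $\P_E(y(T) - y_1) = 0$, which is \eqref{Proj-Of-Y-T-in-E}; and from $\norm{\xi}_H \le 1$ one gets $\norm{y(T)-y_1}_H \le \varepsilon$, which is \eqref{ApproxCont-Req}.

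I expect the main delicate point to be the careful handling of the non-smooth term $\varepsilon\norm{(I-\P_E)z_T}_H$: one must pass to subdifferentials rather than gradients, justify that the optimality condition holds in the form $0 \in \partial J_{\text{ap}}$, and correctly extract that the subgradient $\xi$ satisfies both $\norm{\xi}_H \le 1$ and $\xi \in E^\perp$. The bookkeeping that the linear forms in $g$, $w$, $f$ and $z_T$ decouple cleanly also requires care — in particular verifying that \eqref{ControlledEq} is genuinely satisfied by the pair $(y,u)$ from \eqref{Dictionary-y-u-Z}, which comes from differentiating $y = F + W + w_*$ and using \eqref{Z-Corr-to-Min} together with the $f$-component of the Euler--Lagrange equation to identify $y' - Ay$ with $Bu$. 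Everything else is a routine convexity/duality computation using assumptions (H1)--(H5) only to make the various adjoints and projections well defined; the unique continuation property \eqref{UC} is not needed here (it was already consumed in Lemma \ref{Lem-Coercivity-of-J}).
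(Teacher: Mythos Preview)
Your strategy is the paper's own --- Euler--Lagrange for $J_{\text{ap}}$ plus the duality pairing between \eqref{ControlledEq} and \eqref{Adjoint-NonHom} --- and your use of a subgradient $\xi$ neatly unifies what the paper handles by splitting into the two cases $\norm{(I-\P_E)Z_T}_H\neq 0$ and $=0$. That part is fine and arguably cleaner.

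There is, however, a genuine ordering gap in how you verify that $(y,u)$ satisfies \eqref{ControlledEq}. You invoke the duality identity for the pair $(y,u)$ with $y=F+W+w_*$ \emph{before} knowing that this $y$ is the solution of \eqref{ControlledEq} with initial datum $y_0$; but the duality identity is only valid for solutions. You then say you will ``check \eqref{ControlledEq} directly from \eqref{Dictionary-y-u-Z} and \eqref{Z-Corr-to-Min}'' by ``differentiating $y=F+W+w_*$''. This step does not work as written: $F$ is merely an element of $L^2(0,T;H)$ and need not be differentiable in time, and \eqref{Z-Corr-to-Min} gives information about $Z$, not about $F+W+w_*$. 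Nothing in the definitions alone forces $y'=Ay+Bu$ or $y(0)=y_0$.

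The paper closes this gap the other way round: it first \emph{defines} $\tilde y$ as the solution of \eqref{ControlledEq} with control $u=B^*Z+G+g_*$ and initial datum $y_0$ (so the duality identity is legitimate for $\tilde y$), and then compares that duality identity with the Euler--Lagrange equation restricted to $g=w=0$. Varying $f\in L^2(0,T;H)$ with $z_T=0$ yields $\int_0^T\langle \tilde y - (F+W+w_*), f\rangle_H\,dt=0$, hence $\tilde y=F+W+w_*=y$; this simultaneously establishes the equation and the initial condition. Once this is in place, your reading-off of \eqref{Moments-Of-U-in-G}, \eqref{Moments-Of-Y-in-W}, \eqref{ApproxCont-Req} and \eqref{Proj-Of-Y-T-in-E} from the remaining variations in $g$, $w$, $z_T$ is correct. (A minor point: with your convention $\xi\in\partial\norm{\cdot}\big((I-\P_E)Z_T\big)$, one need not have $\xi\in E^\perp$; what enters is $(I-\P_E)\xi$, which does lie in $E^\perp$ and has norm $\le 1$, so your conclusions survive unchanged.)
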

We then easily deduce Theorem \ref{Thm-ApproxControl} from these two lemmas, whose proofs are done in Section \ref{Subsec-Coercivity} and Section \ref{Subsec-Min-Of-J-Controls}.

\subsection{Proof of Lemma \ref{Lem-Coercivity-of-J}}\label{Subsec-Coercivity}

As $J_{\text{ap}}$ is the sum of convex functions, it is obvious that $J_{\text{ap}}$ will be strictly convex if one of these functions is strictly convex. We claim that the functional $K$ defined by 
$$
	K : (z_T, g, w, f) \in H \times \G \times \W \times L^2(0,T; H)
	\mapsto  
		\int_0^T \norm{ B^* z(t) + g(t) }_{U}^2 \, dt 
		+ 
		\int_0^T \norm{ f(t) + w(t)}_H^2 \, dt,  
$$
where $z$ solves \eqref{Adjoint-NonHom}, is strictly convex. Indeed, according to the unique continuation property \eqref{UC}, $K$ obviously defines a strictly positive quadratic form on $H \times \G \times \W \times L^2(0,T; H)$, so that this is strictly convex, thus entailing the strict convexity of $J_{\text{ap}}$.

Now, to prove the coercivity of $J_{\text{ap}}$, the difficulty is that $K$ does not correspond \emph{in general} to a norm equivalent to $H \times \G \times \W \times L^2(0,T; H)$. Thus, in order to show that $J_{\text{ap}}$ is strictly coercive on $H \times \G \times \W \times L^2(0,T; H)$, we rather proceed by contradiction and take a sequence $(z_{T,n}, g_n, w_n, f_n) \in H \times \G \times \W \times L^2(0,T; H)$ indexed by $n \in \N$ such that
\begin{equation}
	\label{Contradiction-Assumptions-1}
	\lim_{n \to \infty} \rho_n = +\infty, \quad \text{ where } \rho_n =  \| (z_{T,n}, g_n, w_n, f_n)\|_{H \times \G \times \W \times L^2(0,T; H)}, 
\end{equation} 
\begin{equation}
	\label{Contradiction-Assumptions-2}
	\hbox{ and } \quad 
	S = \sup_{n \in \N} J_{\text{ap}} (z_{T,n}, g_n, w_n, f_n) < \infty.
\end{equation}
As usual, we start by renormalizing the data and introduce 
$$
	(\tilde z_{T,n}, \tilde g_n, \tilde w_n, \tilde f_n)
	= 
	\frac{1}{\rho_n} (z_{T,n}, g_n, w_n, f_n), 
$$
so that
\begin{equation}
	\label{Bounds-Tilde-z-g-w-f}
	\forall n \in \N, \quad \| (\tilde z_{T,n}, \tilde g_n, \tilde w_n, \tilde f_n)\|_{H \times \G \times \W \times L^2(0,T; H)} = 1. 
\end{equation}
Using \eqref{Contradiction-Assumptions-2}, we obtain that for all $n \in \N$, 
\begin{multline}
	\label{J-tilde-infini}
	\rho_n^2 
	 \left( 
	 	\frac{1}{2} \int_0^T \| B^* \tilde z_n(t) +\tilde g_n (t) \|_U^2 \, dt
		+
		\frac{1}{2} \int_0^T \| \tilde f_n(t) + \tilde w_n(t) \|_H^2 \, dt
	\right)
	\\
	+ 
	\rho_n
	 \left(	
	 	\langle y_0, \tilde z_n(0) \rangle_H - \langle y_1, \tilde z_{T,n} \rangle_H
		+
		\int_0^T \langle B^* \tilde z_n(t), g_*(t) \rangle \,dt
		+ 
		\int_0^T \langle \tilde f_n(t), w_*(t) \rangle\, dt
		+ 
		\varepsilon \| (I - \P_E) \tilde z_{T,n} \|_H
	 \right) 
	 \\
	 \leq  J_{\text{ap}}(z_{T,n}, g_n, w_n, f_n) \leq S.
\end{multline}
Using \eqref{Contradiction-Assumptions-1} and \eqref{Bounds-Tilde-z-g-w-f}, one easily checks that necessarily, 
\begin{equation}
	\label{Main-Order-infinity}
	\lim_{n \to \infty} 
		\left( 
		\int_0^T \| B^* \tilde z_n +\tilde g_n \|_U^2 \, dt
		+
		\int_0^T \| \tilde f_n + \tilde w_n \|_H^2 \, dt
		\right)
		= 0.
\end{equation}
Now, since $(\tilde z_{T,n}, \tilde g_n, \tilde w_n, \tilde f_n)$ are uniformly bounded in $H \times \G \times \W \times L^2(0,T; H)$ according to \eqref{Bounds-Tilde-z-g-w-f}, and since $\W$ is a  finite-dimensional vector space, there exists $(\tilde z_T, \tilde g, \tilde w, \tilde f) \in H \times \G \times \W \times L^2(0,T; H)$ such that, up to an extraction still denoted the same for simplicity, 
\begin{align}
	\label{Conv-z-T-n}
	&(\tilde z_{T,n})  \underset{n\to\infty}\rightharpoonup \tilde z_T && \hbox{ weakly in } H, 
	\\	
	\label{Conv-g-n}
	& (\tilde g_{n})  \underset{n\to\infty}\rightharpoonup \tilde g && \hbox{ weakly in } L^2(0,T; U),
	\\
	\label{Conv-w-n}
	&(\tilde w_{n})  \underset{n\to\infty}\rightarrow \tilde w && \hbox{ strongly in } L^2(0,T; H),
	\\
	\label{Conv-f-n}
	&(\tilde f_{n})  \underset{n\to\infty}\rightharpoonup \tilde f && \hbox{ weakly in } L^2(0,T; H),
\end{align}
and, from \eqref{Main-Order-infinity} and the above convergences, 
\begin{equation*}
	\int_0^T \| B^* \tilde z +\tilde g \|_U^2 \, dt
		+
	\int_0^T \| \tilde f + \tilde w \|_H^2 \, dt
	= 0, 	
\end{equation*}
where $\tilde z$ is the solution of \eqref{Adjoint-NonHom} with initial datum $\tilde z_T$ and source term $\tilde f$. We thus deduce from the unique continuation property \eqref{UC} that 
\begin{equation*}
	\tilde z_T = 0, \quad \tilde g = 0, \quad \tilde w = 0, \quad \tilde f = 0.
\end{equation*}
The convergences \eqref{Conv-z-T-n}--\eqref{Conv-f-n} then imply that 
\begin{equation*}
	\lim_{n \to \infty} 
		\left( 
		\langle y_0, \tilde z_n(0) \rangle_H - \langle y_1, \tilde z_{T,n} \rangle_H
		+
		\int_0^T \langle B^* \tilde z_n(t), g_*(t) \rangle \,dt
		+ 
		\int_0^T \langle \tilde f_n(t), w_*(t) \rangle\, dt
		\right)
		= 
		0.
\end{equation*}
Therefore, based on \eqref{J-tilde-infini}, we necessarily have 
\begin{equation*}
	\lim_{n \to \infty} \| (I - \P_E) \tilde z_{T,n} \|_H = 0. 
\end{equation*}
Since $E$ is a finite dimensional vector space, with the convergence \eqref{Conv-z-T-n}, we deduce that 
\begin{equation}
	\label{Conv-z-T-n-strong}
	(\tilde z_{T,n})  \underset{n\to\infty}\rightarrow 0 \quad \hbox{ strongly in } H.
\end{equation}
Besides, combining the strong convergence \eqref{Conv-w-n} with \eqref{Main-Order-infinity}, we also have that 
\begin{equation}
	\label{Conv-f-n-strong}
	(\tilde f_{n})  \underset{n\to\infty}\rightarrow 0 \quad \hbox{ strongly in } L^2(0,T;H).
\end{equation}
The strong convergences \eqref{Conv-z-T-n-strong} and \eqref{Conv-f-n-strong} imply that the solution $\tilde z_n$ of $\tilde z_n' + A^* \tilde z_n = \tilde f_n$ in $(0,T)$ with initial datum $\tilde z_n(T) = \tilde z_{T,n}$ strongly converges to $0$ in $L^2(0,T; H)$, so that $B^* \tilde z_n$ strongly converges to $0$ in $L^2(0,T; U)$ and, from \eqref{Main-Order-infinity}, $\tilde g_n$ strongly converges to $0$ in $L^2(0,T; U)$. These strong convergences to $0$ contradict condition \eqref{Bounds-Tilde-z-g-w-f}. This concludes the proof of Lemma \ref{Lem-Coercivity-of-J}.

\begin{remark}
	As noticed in \cite{FabPuelZua}, in fact the above proof shows the following slightly stronger coercivity property: 
	$$
		\liminf_{\| (z_T, g, w, f)\|_{H \times \G \times \W \times L^2(0,T; H)} \to \infty}  
			\frac{J_{\text{ap}}(z_T, g, w, f)}{\| (z_T, g, w, f)\|_{H \times \G \times \W \times L^2(0,T; H)}}
			\geq \varepsilon.
	$$
\end{remark}

%

%
\subsection{Proof of Lemma \ref{Lem-Min-Of-J-Controls}}\label{Subsec-Min-Of-J-Controls}

Let $(Z_T, G, W, F)$ denote the unique minimizer of $J_{\text{ap}}$ in $ H \times \G \times \W \times L^2(0,T; H)$ and $Z$ the corresponding solution to \eqref{Z-Corr-to-Min}. We will simply write down the Euler-Lagrange equation satisfied by $(Z_T, G, W, F)$, the only difficulty being the possible lack of regularity of the functional $J$ if $\norm{(I - \P_E) Z_T}_H = 0$. 
\medskip

We thus start with the case $\norm{(I - \P_E) Z_T}_H \neq 0$. The functional $J_{\text{ap}}$ is then smooth locally around $(Z_T, G, W, F)$ and the Euler-Lagrange equation satisfied by $(Z_T, G, W, F)$ yields: for all $(z_T, g, w, f) \in H \times \G \times \W \times L^2(0,T; H)$, denoting by $z$ the corresponding solution of \eqref{Adjoint-NonHom}, 
\begin{multline}
	\label{Euler-Lagrange-App}
		0
		= 
		\int_0^T \langle B^* Z(t) + G(t), B^* z(t) + g(t) \rangle_{U} \, dt 
		+ 
		\int_0^T \langle F(t)+ W(t), f(t) + w(t) \rangle_H \, dt
		+
		\langle y_0, z(0) \rangle_H - \langle y_1, z_T \rangle_H
		\\
		+ 
		\int_0^T \langle B^* z(t), g_*(t) \rangle_U \, dt
		+ 
		 \int_0^T \langle f(t), w_*(t) \rangle_H \, dt
		+ 
		\varepsilon \left\langle \frac{(I - \mathbb{P}_E) Z_T}{\norm{ (I - \mathbb{P}_E) Z_T}_H}, z_T \right\rangle_H.
\end{multline}
Note that here, we use the fact that $\mathbb{P}_E$ is the orthogonal projection on $E$ to compute the derivative of the last term.

Now, taking $g = 0$ and $w = 0$ in the above formulation, for all $(z_T, f) \in H \times L^2(0,T; H)$, 
\begin{multline*}
	0 
	= 
	\int_0^T \langle B^* Z(t) + G(t) + g_*(t), B^* z(t) \rangle_{U} \, dt 
	+
	\int_0^T \langle F(t)+ W(t) + w_*(t), f(t) \rangle_H \, dt
	\\
	+
	\langle y_0, z(0) \rangle_H - \langle y_1, z_T \rangle_H
	+
	\varepsilon \left\langle \frac{(I - \mathbb{P}_E) Z_T}{\norm{ (I - \mathbb{P}_E) Z_T}_H}, z_T \right\rangle_H.
\end{multline*}
Now, if we consider $\tilde y$ the solution of \eqref{ControlledEq} with initial datum $y_0$ and control function $u = B^* Z + G + g_*$, which obviously belongs to $L^2(0,T; U)$, and multiply it by solutions $z$ of \eqref{Adjoint-NonHom} with $z_T \in H$ and $f \in L^2(0,T; H)$, we get that 
$$
	0 
	= 
	\int_0^T \langle B^* Z(t) + G(t) + g_*(t), B^* z(t) \rangle_{U} \, dt 
	+
	\int_0^T \langle \tilde y(t), f (t)\rangle_H \, dt
	+
	\langle y_0, z(0) \rangle_H - \langle \tilde y(T), z_T \rangle_H.
$$
Thus, taking $z_T = 0$ and arbitrary $f \in L^2(0,T; H)$, one easily checks that 
$$
	\tilde y = F + W + w_* \quad \hbox{ in } (0,T), 
$$
\emph{i.e.} that $\tilde y$ coincides with $y$ given in \eqref{Dictionary-y-u-Z}. Taking then $f = 0$ and $z_T$ arbitrary in $H$, we deduce that 
$$
	y(T) = y_1 - \varepsilon \frac{(I - \mathbb{P}_E) Z_T}{\norm{ (I - \mathbb{P}_E) Z_T}_H}, 
$$
which of course satisfies
$$
	\norm{y(T) - y_1} \leq \varepsilon \quad \hbox{ and } \quad \P_E y(T) = \P_E y_1. 
$$
We then have to check the properties \eqref{Moments-Of-U-in-G} and \eqref{Moments-Of-Y-in-W}. In order to do that, we simply consider \eqref{Euler-Lagrange-App} in the case $z_T = 0$ and $f= 0$: for all $g \in \G$ and $w \in \W$, 
$$
	0 = 
		\int_0^T \langle B^* Z(t) + G(t),  g(t) \rangle_{U} \, dt 
		+ 
		\int_0^T \langle F(t)+ W(t),  w(t) \rangle_H \, dt.
$$
Consequently $\P_\G (B^* Z + G) = 0$ and $\P_\W (F+ W) = 0$. In view of the definition of $y$ and $u$ in \eqref{Dictionary-y-u-Z}, we immediately deduce \eqref{Moments-Of-U-in-G} and \eqref{Moments-Of-Y-in-W}, thus concluding the proof of Lemma \ref{Lem-Min-Of-J-Controls} when $\norm{(I - \P_E)Z_T}_H \neq 0$.
\medskip

In the case $\norm{(I - \P_E) Z_T}_H = 0$, the functional $J_{\text{ap}}$ is not regular due to the last term in \eqref{Functional-Approx-Cont}, but is still strictly convex.  Using that for all $\eta \in \R$, and $(z_T, g, w, f) \in H \times \G \times \W \times L^2(0,T; H)$, we have 
$$
	J_{\text{ap}}(Z_T, G, W, F) \leq J_{\text{ap}}( (Z_T, G, W, F) + \eta (z_T, g, w, f)),  
$$
we have that 
$$
	\liminf_{\eta \to 0} \left( \frac{J_{\text{ap}}( (Z_T, G, W, F) + \eta (z_T, g, w, f)) - J_{\text{ap}}(Z_T, G, W, F) }{\eta} \right) \geq 0.
$$
Accordingly, we deduce for all $(z_T, g, w, f) \in H \times \G \times \W \times L^2(0,T; H)$, 
\begin{multline*}
		\int_0^T \langle B^* Z(t) + G(t), B^* z(t) + g(t) \rangle_{U} \, dt 
		+ 
		\int_0^T \langle F(t)+ W(t), f(t) + w(t) \rangle_H \, dt
		+
		\langle y_0, z(0) \rangle_H
		\\
		 - \langle y_1, z_T \rangle_H
		+ 
		\int_0^T \langle B^* z(t), g_*(t) \rangle_U \, dt
		+ 
		 \int_0^T \langle f(t), w_*(t) \rangle_H \, dt
		\geq 
		-  \varepsilon \| (I - \P_E) z_T\|_H.
\end{multline*}
Using the similar estimate corresponding to $-(z_T, g, w, f)$, we deduce that for all $(z_T, g, w, f) \in H \times \G \times \W \times L^2(0,T; H)$, 
\begin{multline*}
		\left|
		\int_0^T \langle B^* Z(t) + G(t), B^* z(t) + g(t) \rangle_{U} \, dt 
		+ 
		\int_0^T \langle F(t)+ W(t), f(t) + w(t) \rangle_H \, dt
		+
		\langle y_0, z(0) \rangle_H
		\right.
		\\
		\left.
		 - \langle y_1, z_T \rangle_H
		+ 
		\int_0^T \langle B^* z(t), g_*(t) \rangle_U \, dt
		+ 
		 \int_0^T \langle f(t), w_*(t) \rangle_H \, dt
		\right| 
		\leq 
		\varepsilon \| (I - \P_E) z_T\|_H.
\end{multline*}
The arguments developed above then allow to conclude that $(y,u)$ given by \eqref{Dictionary-y-u-Z} satisfy the equations \eqref{ControlledEq} and that $y(T)$ satisfies, for all $z_T \in H$, 
$$
	\left|
		\langle y(T)- y_1, z_T \rangle_H
	\right|
	\leq \varepsilon \| (I - \P_E) z_T\|_H, 
$$
which implies \eqref{ApproxCont-Req} and \eqref{Proj-Of-Y-T-in-E}. The proofs of \eqref{Moments-Of-U-in-G} and \eqref{Moments-Of-Y-in-W} then follow as before.

\subsection{A remark: relaxing the linear constraint (\ref{Moments-Of-Y-in-W})}

	If we are interested only in a relaxation of the constraints \eqref{Moments-Of-Y-in-W} into 
	\begin{equation}
		\label{Moments-Of-Y-in-W-Rel}
		\norm{ \P_\W y - w_*}_{L^2(0,T; H)} \leq \varepsilon, 
	\end{equation}
	this can be done under the conditions (H1)--(H5) and the unique continuation property \eqref{UC}, even when $\W$ is possibly of infinite dimension. To be more precise, we have the following result:
	
\begin{theorem}[Approximate controllability with linear projection constraints - relaxation of the projection on $\W$]
	\label{Thm-ApproxControl-Bis}
	Let the hypotheses (H1)--(H5) be satisfied, and let $T>0$, and assume the unique continuation property \eqref{UC}.
	\\
	Then for any $y_0$ and $y_1$ in $H$, $\varepsilon >0$, $g_* \in \G$, and $w_* \in \W$, there exists a control function $u \in L^2(0,T; U)$ such that \eqref{Moments-Of-U-in-G} holds, the solution $y$ of \eqref{ControlledEq} satisfies \eqref{ApproxCont-Req}, and the conditions \eqref{Moments-Of-Y-in-W-Rel} and \eqref{Proj-Of-Y-T-in-E}.
\end{theorem}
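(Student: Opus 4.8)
\textbf{Proof strategy for Theorem~\ref{Thm-ApproxControl-Bis}.}

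The plan is to reproduce the proof of Theorem~\ref{Thm-ApproxControl} given in Sections~\ref{Subsec-Coercivity} and~\ref{Subsec-Min-Of-J-Controls}, simply replacing the functional $J_{\text{ap}}$ of~\eqref{Functional-Approx-Cont} by
\[
	\tilde J_{\text{ap}}(z_T, g, w, f) := J_{\text{ap}}(z_T, g, w, f) + \varepsilon \norm{ w}_{L^2(0,T; H)},
\]
still defined for $(z_T, g, w, f) \in H \times \G \times \W \times L^2(0,T; H)$, with $z$ the solution of~\eqref{Adjoint-NonHom}. The guiding heuristic is that the new term $\varepsilon \norm{ w}_{L^2(0,T; H)}$ plays, for the relaxed trajectory constraint~\eqref{Moments-Of-Y-in-W-Rel} on $\P_\W y$, exactly the role that the term $\varepsilon \norm{ (I - \P_E) z_T}_H$ already present in $J_{\text{ap}}$ plays for the relaxed final-state requirement in~\eqref{ApproxCont-Req}. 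Since this added term is convex (though not strictly), positively homogeneous of degree one, and continuous on $\W$, the functional $\tilde J_{\text{ap}}$ is still continuous and strictly convex, the strict convexity coming, exactly as in the proof of Lemma~\ref{Lem-Coercivity-of-J}, from the quadratic form $K$, which is strictly positive by the unique continuation property~\eqref{UC}.

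For the coercivity of $\tilde J_{\text{ap}}$, I would run the contradiction argument of Section~\ref{Subsec-Coercivity} essentially verbatim: assuming $\rho_n := \norm{(z_{T,n}, g_n, w_n, f_n)}_{H \times \G \times \W \times L^2(0,T; H)} \to +\infty$ with $\tilde J_{\text{ap}}$ bounded along the sequence, renormalizing by $\rho_n$, one obtains as in~\eqref{Main-Order-infinity} that the quadratic part tends to $0$, and the weak limits of the renormalized data fall under the hypotheses of~\eqref{UC}, hence vanish. The only modification concerns the first-order part: the bound now also contains the nonnegative term $\varepsilon \norm{ \tilde w_n}_{L^2(0,T; H)}$, which is therefore forced to tend to $0$, so that $\tilde w_n \to 0$ \emph{strongly} in $L^2(0,T; H)$ — directly, without any compactness input. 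This is precisely the point at which the proof of Lemma~\ref{Lem-Coercivity-of-J} invoked $\dim \W < \infty$; as it is the only use of that hypothesis, it can now be dropped. The remaining convergences ($\tilde f_n \to 0$, then $B^* \tilde z_n \to 0$, then $\tilde g_n \to 0$) follow as before and contradict the normalization. I expect this substitution — the $\varepsilon \norm{ w}_{L^2(0,T; H)}$ penalization in place of the compactness of $\W$ — to be the only genuinely new point, and the one to double-check: one must make sure no finite-dimensionality of $\W$ is hidden elsewhere, which it is not, $\W$ otherwise entering only through the projection $\P_\W$ and through a weak-limit extraction that is automatic once $\tilde w_n$ converges strongly.

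Consequently $\tilde J_{\text{ap}}$ admits a unique minimizer $(Z_T, G, W, F)$; denoting by $Z$ the associated solution of~\eqref{Z-Corr-to-Min}, I would show that $y := F + W + w_*$ and $u := B^* Z + G + g_*$ as in~\eqref{Dictionary-y-u-Z} solve the problem. When $\tilde J_{\text{ap}}$ is differentiable at the minimizer, its Euler--Lagrange equation is~\eqref{Euler-Lagrange-App} with the extra term $\varepsilon \langle W / \norm{ W}_{L^2(0,T; H)}, \, w \rangle_{L^2(0,T; H)}$ added; the variations in $f$, in $z_T$ and in $g$ are unchanged and yield, exactly as in Section~\ref{Subsec-Min-Of-J-Controls}, that $(y,u)$ solves~\eqref{ControlledEq} and satisfies~\eqref{ApproxCont-Req}, \eqref{Proj-Of-Y-T-in-E} and~\eqref{Moments-Of-U-in-G}, whereas the variation in $w \in \W$ now gives $\P_\W(F + W) = - \varepsilon \, W / \norm{ W}_{L^2(0,T; H)}$, hence $\norm{ \P_\W y - w_*}_{L^2(0,T; H)} \leq \varepsilon$, that is~\eqref{Moments-Of-Y-in-W-Rel}.

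In the non-differentiable cases — when $(I - \P_E) Z_T = 0$ and/or $W = 0$ — I would argue with one-sided difference quotients exactly as at the end of Section~\ref{Subsec-Min-Of-J-Controls}: comparing the values of $\tilde J_{\text{ap}}$ at $(Z_T, G, W, F) \pm \eta (z_T, g, w, f)$ and letting $\eta \to 0$ produces, for every test datum, the inequality bounding the relevant linear functional by $\varepsilon \norm{ (I - \P_E) z_T}_H + \varepsilon \norm{ w}_{L^2(0,T; H)}$, from which~\eqref{ApproxCont-Req}, \eqref{Proj-Of-Y-T-in-E} and~\eqref{Moments-Of-Y-in-W-Rel} are read off, together with the exact identities~\eqref{Moments-Of-U-in-G} and $y = F + W + w_*$. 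The simultaneous non-smoothness of the two $\varepsilon$-terms causes no difficulty, being handled exactly as the single non-smooth term already is in the proof of Lemma~\ref{Lem-Min-Of-J-Controls}.
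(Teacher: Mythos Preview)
Your proposal is correct and follows essentially the same approach as the paper: you introduce the same modified functional $\tilde J_{\text{ap}} = J_{\text{ap}} + \varepsilon\norm{w}_{L^2(0,T;H)}$, and you correctly identify that the added penalty term forces $\norm{\tilde w_n}\to 0$ in the coercivity argument \emph{exactly as the term $\varepsilon\norm{(I-\P_E)\tilde z_{T,n}}_H$ forces $\norm{(I-\P_E)\tilde z_{T,n}}\to 0$}, which is precisely the substitution the paper indicates (``similarly as what is done for $\tilde z_{T,n}$'') to bypass the finite-dimensionality of $\W$. Your treatment of the optimality conditions, including the non-differentiable cases, is also in line with the paper's sketch and in fact spells out more detail than the paper does.
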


\begin{proof}[Sketch of the proof]
	The proof of Theorem \ref{Thm-ApproxControl-Bis} simply consists in minimizing the functional 
\begin{multline*}
		\tilde J_{\text{ap}}(z_T, g,w , f )
		= 
		\frac{1}{2} \int_0^T \norm{ B^* z(t) + g(t) }_{U}^2 \, dt 
		+ 
		\frac{1}{2} \int_0^T \norm{ f(t) + w(t)}_H^2 \, dt
		+
		\langle y_0, z(0) \rangle_H - \langle y_1, z_T \rangle_H
		\\
		+ 
		\int_0^T \langle B^* z(t), g_*(t) \rangle_U \, dt
		+ 
		 \int_0^T \langle f(t), w_*(t) \rangle_H \, dt
		+ 
		\varepsilon \norm{ (I - \mathbb{P}_E) z_T}_H
		+ 
		\varepsilon \norm{ w}_{L^2(0,T; H)} , 
\end{multline*}
defined for 
$$
	(z_T, g,w , f ) \in H \times \G \times \W \times L^2(0,T; H),
$$
where $z$ denotes the solution of \eqref{Adjoint-NonHom}, instead of $J_{\text{ap}}$ in \eqref{Functional-Approx-Cont}.

One can then follow the proof of Theorem \ref{Thm-ApproxControl}, and remark that the only place which uses that $\W$ is of finite dimension is for the proof of the convergence \eqref{Conv-w-n} in the proof of Lemma \ref{Lem-Coercivity-of-J}.

But, in fact, with the addition of the term $\varepsilon \norm{ w}_{L^2(0,T; H)}$ in the functional $\tilde J_{\text{ap}}$, one can prove the coercivity of $\tilde J_{\text{ap}}$ as in Lemma \ref{Lem-Coercivity-of-J}: with the same notations as in the proof of Lemma \ref{Lem-Coercivity-of-J}, one can prove the strong convergence of $\tilde w_n$ to $0$ in $L^2(0,T; H)$ similarly as what is done for $\tilde z_{T,n}$. The detailed proof is left to the reader.

Writing then the optimality conditions for the minimizers, similarly as in Lemma \ref{Lem-Min-Of-J-Controls}, we easily check that the optimum of $\tilde J_{\text{ap}}$ provides a solution to the control problem  \eqref{Moments-Of-U-in-G}, \eqref{ApproxCont-Req}, \eqref{Moments-Of-Y-in-W-Rel} and \eqref{Proj-Of-Y-T-in-E}. 
\end{proof}

%
\section{Exact controllability: Proof of Theorem \ref{Thm-ExactCont}}\label{Sec-ExactCont}
\subsection{Strategy}\label{Subsec-Strategy-ExactCont}
We assume the hypotheses (H1)--(H4), we let $T>0$, and we assume the unique continuation property \eqref{UC}, as well as the observability inequality \eqref{Obs-z-T} for solutions of \eqref{Adjoint-Hom}. 

The first part of the proof of Theorem \ref{Thm-ExactCont} consists in showing the observability inequality \eqref{Obs-z-T-General} for solutions of \eqref{Adjoint-NonHom}, proved in Section \ref{Subsec-Lem-Obs-z-T}: 
\begin{lemma}
	\label{Lem-Obs-z-T-General}
	Within the above setting, there exists a constant $C>0$ such that for all $(z_T, g, w,f) \in H \times \G \times \W \times L^2(0,T; H)$, the solution $z$ of \eqref{Adjoint-NonHom} satisfies \eqref{Obs-z-T-General}.
\end{lemma}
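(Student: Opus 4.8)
The statement to establish is Lemma \ref{Lem-Obs-z-T-General}, that is, the observability inequality \eqref{Obs-z-T-General} for solutions of the nonhomogeneous adjoint equation \eqref{Adjoint-NonHom}. The plan is to proceed by a compactness-uniqueness argument, the key point being that both $\G$ and $\W$ are finite-dimensional. First I would argue by contradiction: suppose there is a sequence $(z_{T,n}, g_n, w_n, f_n) \in H \times \G \times \W \times L^2(0,T; H)$ with
\begin{equation*}
	\| (z_{T,n}, g_n, w_n, f_n)\|_{H \times \G \times \W \times L^2(0,T; H)} = 1
\end{equation*}
while $\norm{ B^* z_n + g_n}_{L^2(0,T; U)} + \norm{ f_n + w_n}_{L^2(0,T; H)} \to 0$, where $z_n$ solves \eqref{Adjoint-NonHom} with data $(z_{T,n}, f_n)$.

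Next I would extract weakly convergent subsequences: $z_{T,n} \rightharpoonup z_T$ weakly in $H$, $f_n \rightharpoonup f$ weakly in $L^2(0,T;H)$, and — crucially using that $\G$ and $\W$ are finite-dimensional — $g_n \to g$ strongly in $L^2(0,T;U)$ and $w_n \to w$ strongly in $L^2(0,T;H)$. By continuous dependence, $z_n \rightharpoonup z$ in a suitable sense, $z$ being the solution of \eqref{Adjoint-NonHom} with data $(z_T, f)$, and $B^* z_n \rightharpoonup B^* z$. Passing to the limit in the vanishing quantities gives $B^* z + g = 0$ and $f + w = 0$ in $(0,T)$, so the unique continuation property \eqref{UC} forces $z_T = 0$, $g = 0$, $w = 0$, and hence $f = -w = 0$. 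Thus $g_n \to 0$ and $w_n \to 0$ strongly, and $f_n = (f_n + w_n) - w_n \to 0$ strongly in $L^2(0,T;H)$. It then follows from the equation that $z_n \to 0$ in $\mathscr{C}^0([0,T];H)$, hence $B^* z_n \to 0$ strongly, and from $B^* z_n + g_n \to 0$ we re-deduce $g_n \to 0$ strongly (consistently). The remaining piece is to show $z_{T,n} \to 0$ strongly in $H$: here I would use the observability inequality \eqref{Obs-z-T} applied to $z_{T,n}$. Writing $z_n = \zeta_n + r_n$ where $\zeta_n$ solves the homogeneous equation \eqref{Adjoint-Hom} with $\zeta_n(T) = z_{T,n}$ and $r_n$ solves \eqref{Adjoint-NonHom} with zero terminal data and source $f_n$, one has $\norm{r_n}_{\mathscr{C}^0([0,T];H)} \lesssim \norm{f_n}_{L^2} \to 0$, so $\norm{B^*\zeta_n - B^* z_n}_{L^2(0,T;U)} \to 0$, hence $B^* \zeta_n \to 0$ strongly; then \eqref{Obs-z-T} gives $\norm{z_{T,n}}_H = \norm{\zeta_n(T)}_H \leq C \norm{B^*\zeta_n}_{L^2(0,T;U)} \to 0$. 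Combining all four strong convergences to zero contradicts the normalization, which proves the lemma.

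\textbf{Main obstacle.} I expect the main subtlety to be the handling of the strong convergence of $g_n$ and $w_n$: this is where finite-dimensionality of $\G$ and $\W$ is genuinely used (a bounded sequence in a finite-dimensional space has a strongly convergent subsequence), and it is what allows the vanishing quantities $B^*z_n + g_n$ and $f_n + w_n$ to split cleanly in the limit. Everything else is the routine mechanics of a compactness-uniqueness argument: the only nonstandard inputs are the strengthened unique continuation property \eqref{UC} — used exactly once, to conclude all limits vanish — and the classical observability inequality \eqref{Obs-z-T}, used to recover the strong convergence of the terminal data $z_{T,n}$. Note that this structure mirrors very closely the coercivity proof of Lemma \ref{Lem-Coercivity-of-J}, the difference being that here no functional $J$ is involved and the estimate sought is linear (an inequality between norms) rather than a coercivity statement.
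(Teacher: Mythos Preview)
Your approach is the same compactness--uniqueness argument as the paper's, and the overall structure is correct. There is, however, one logical slip: the sentence ``It then follows from the equation that $z_n \to 0$ in $\mathscr{C}^0([0,T];H)$'' is premature, since at that point you only know $f_n \to 0$ strongly and $z_{T,n} \rightharpoonup 0$ \emph{weakly}; strong convergence of $z_n$ in $\mathscr{C}^0([0,T];H)$ would require strong convergence of the terminal data, which is precisely what you are still trying to prove. The fix is immediate and you already have the ingredients: deduce $B^* z_n \to 0$ strongly directly from $B^* z_n = (B^* z_n + g_n) - g_n$, both terms of which tend to zero strongly. With that in hand, your decomposition $z_n = \zeta_n + r_n$ and the application of \eqref{Obs-z-T} go through exactly as you wrote, yielding $z_{T,n} \to 0$ strongly and the desired contradiction. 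The paper organizes the same ingredients slightly differently, first recording the auxiliary estimate $\|(z_T,g,w,f)\| \leq C(\|B^*z\| + \|f\| + \|g\| + \|w\|)$ (obtained from the same decomposition $z = \zeta + r$) and then invoking it at the end once $B^* z_n, f_n, g_n, w_n$ have all been shown to vanish strongly; but this is purely a matter of presentation.
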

Once this lemma has been obtained, we proceed as in the proof of Theorem \ref{Thm-ApproxControl} with the formal choice $\varepsilon = 0$. To be more precise, we set $(y_0, y_1) \in H^2$, $g_* \in \G$, $w_* \in \W$, and introduce the functional 
\begin{multline}
	\label{Functional-Exact-Cont}
		J_{\text{ex}}(z_T, g,w , f )
		= 
		\frac{1}{2} \int_0^T \norm{ B^* z(t) + g(t) }_{U}^2 \, dt 
		+ 
		\frac{1}{2} \int_0^T \norm{ f(t) + w(t)}_H^2 \, dt
		+
		\langle y_0, z(0) \rangle_H - \langle y_1, z_T \rangle_H
		\\
		+ 
		\int_0^T \langle B^* z(t), g_*(t) \rangle_U \, dt
		+ 
		 \int_0^T \langle f(t), w_*(t) \rangle_H \, dt, 
\end{multline}
defined for 
$$
	(z_T, g,w , f ) \in H \times \G \times \W \times L^2(0,T; H),
$$
where $z$ denotes the solution of \eqref{Adjoint-NonHom}.

The strict convexity of $J_{\text{ex}}$ comes as in the proof of Lemma \ref{Lem-Coercivity-of-J}, while its coercivity immediately follows from the observability property \eqref{Obs-z-T-General} obtained in Lemma \ref{Lem-Obs-z-T-General}. 

We then consider the unique minimizer $(Z_T, G, W, F)$ of $J_{\text{ex}}$ in $H \times \G \times \W \times L^2(0,T; H)$ and proceed as in Lemma \ref{Lem-Min-Of-J-Controls} to deduce a controlled trajectory $y$ and a control function $u$ which satisfy all the requirements (in fact, it is even easier here as the functional $J_{\text{ex}}$ is differentiable everywhere in $H \times \G \times \W \times L^2(0,T; H)$). Details of the proof are left to the reader.

\subsection{Proof of Lemma \ref{Lem-Obs-z-T-General}}\label{Subsec-Lem-Obs-z-T}

For $z_T \in H$, we introduce $\tilde z$ as the solution of $\tilde z' + A^* \tilde z = 0$ in $(0,T)$ and $\tilde z(T) = z_T$.

From the observability property \eqref{Obs-z-T}, we thus get a constant $C>0$ such that for all $z_T \in H$, 
$$
	\norm{z_T}_H\leq C \norm{B^* \tilde z}_{L^2(0,T; U)}. 
$$
We then use that $B^* \in \mathscr{L}(H,U)$ and that there exists $C>0$ such that for all $f \in L^2(0,T; H)$, the solution $z_f' + A^* z_f = f$ in $(0,T)$ and $ z_f(T) = 0$ satisfies
$$
	\norm{z_f}_{L^2(0,T; H)} \leq C \norm{f}_{L^2(0,T; H)}. 
$$
Therefore, we easily get a constant $C>0$ such that for all $z_T \in H$ and $f \in L^2(0,T; H)$, the solution $z$ of \eqref{Adjoint-NonHom} satisfies
$$
	\norm{z_T}_H\leq C \norm{B^* z}_{L^2(0,T; U)} + C \norm{f}_{L^2(0,T; H)}. 
$$
Indeed, this can be easily deduced by writing $ \tilde z = z -z_f$ and using the above estimates.
\\
We then deduce the existence of a constant $C>0$ such that for all $(z_T, g,w,f)\in H \times \G \times \W \times L^2(0,T; H)$, 
\begin{multline}
	\label{Obs-z-T-Preliminary}
	\| (z_T, g,w,f)\|_{H \times \G \times \W \times L^2(0,T; H)}
	\\
	\leq 
	C\left( \norm{B^* z}_{L^2(0,T; U)} +  \norm{f}_{L^2(0,T; H)} + \norm{g}_{L^2(0,T; U)} +  \norm{w}_{L^2(0,T; H)} \right).
\end{multline}

Now, we can deduce the observability inequality \eqref{Obs-z-T-General} by contradiction. Assume that we have a sequence $(z_{T,n}, g_n,w_n,f_n)\in H \times \G \times \W \times L^2(0,T; H)$ such that 
\begin{align}
	\label{ContradictionAss1-Exact}	
	& \forall n \in \N, \quad \norm{(z_{T,n}, g_n,w_n,f_n)}_{ H \times \G \times \W \times L^2(0,T; H)} = 1, 
	\\
	\label{ContradictionAss2-Exact}	
	&
	\hbox{ and } 
	\quad
	\lim_{n \to \infty} \left(\norm{ B^* z_n + g_n}_{L^2(0,T; U)} + \norm{ f_n + w_n}_{L^2(0,T; H)}\right) = 0.
\end{align}
From \eqref{ContradictionAss1-Exact} and the fact that $\G$ and $\W$ are of finite dimension, we obtain the following convergences: there exists $(z_{T}, g, w, f) \in H \times \G \times \W \times L^2(0,T; H)$ such that, up to an extraction still denoted the same for simplicity,
\begin{align}
	\label{Conv-z-T-n-ex}
	&(z_{T,n})  \underset{n\to\infty}\rightharpoonup  z_T && \hbox{ weakly in } H, 
	\\	
	\label{Conv-g-n-ex}
	& (g_{n})  \underset{n\to\infty}\rightarrow  g && \hbox{ strongly in } L^2(0,T; U),
	\\
	\label{Conv-w-n-ex}
	&(w_{n})  \underset{n\to\infty}\rightarrow  w && \hbox{ strongly in } L^2(0,T; H),
	\\
	\label{Conv-f-n-ex}
	&(f_{n})  \underset{n\to\infty}\rightharpoonup  f && \hbox{ weakly in } L^2(0,T; H),
\end{align}
and,  from \eqref{ContradictionAss2-Exact} and the above convergences, 
\begin{equation*}
	\int_0^T \| B^*  z + g \|_U^2 \, dt
		+
	\int_0^T \|  f +  w \|_H^2 \, dt
	= 0, 	
\end{equation*}
where $ z$ is the solution of \eqref{Adjoint-NonHom}. It follows from \eqref{UC} that $z_T = 0$, $ g = 0$, $w = 0$ and $f = 0$. Thus, in view of the strong convergences \eqref{Conv-g-n-ex}--\eqref{Conv-w-n-ex}, the condition \eqref{ContradictionAss2-Exact} implies that $B^* z_n$ strongly converges to $0$ in $L^2(0,T; U)$ and $f_n$ strongly converges to $0$ in $L^2(0,T; H)$, making the condition \eqref{ContradictionAss1-Exact} incompatible with the observability estimate \eqref{Obs-z-T-Preliminary}. This completes the proof of Lemma \ref{Lem-Obs-z-T-General}.
%

%
\section{Null controllability: Proofs of Theorem \ref{Thm-NullCont} and Proposition~\ref{Prop-From-Usual-Obs-z0-to-Obs-z0-Gal}}
\label{Sec-NullCont}

\subsection{Proof of Theorem \ref{Thm-NullCont}}
\label{Subsec-Proof-NullCont}
We assume the hypotheses (H1)--(H4), we let $T>0$, and we assume the observability inequality \eqref{Obs-z-0-General} for solutions of \eqref{Adjoint-NonHom}. 

We then take $y_0 \in H$, $g_* \in \G$, $w_* \in \W$, and introduce the functional 
\begin{multline}
	\label{Functional-Null-Cont}
		J_{\text{nu}}(z_T, g,w , f )
		= 
		\frac{1}{2} \int_0^T \norm{ B^* z(t) + g(t) }_{U}^2 \, dt 
		+ 
		\frac{1}{2} \int_0^T \norm{ f(t) + w(t)}_H^2 \, dt
		+
		\langle y_0, z(0) \rangle_H 
		\\
		+ 
		\int_0^T \langle B^* z(t), g_*(t) \rangle_U \, dt
		+ 
		 \int_0^T \langle f(t), w_*(t) \rangle_H \, dt, 
\end{multline}
defined for 
$$
	(z_T, g,w , f ) \in H \times \G \times \W \times L^2(0,T; H),
$$
where $z$ denotes the solution of \eqref{Adjoint-NonHom}.

We then introduce the set 
\begin{multline}
	\label{Def-Set-N}
	N = \{(z_T, g, w, f) \in H \times \G \times \W \times L^2(0,T; H),
	\\
	 \hbox{ for which } B^* z + g = 0 \hbox{ in } L^2(0,T; U),  \hbox{ and } f + w = 0 \hbox{ in } L^2(0,T; H) \}.
\end{multline}	
According to the observability inequality \eqref{Obs-z-0-General} for solutions of \eqref{Adjoint-NonHom}, this set $N$ can be characterized as follows: 
\begin{equation}
	\label{Set-N}
	N = \{ (z_T, 0, 0, 0) \hbox{ such that  the solution $z$ of \eqref{Adjoint-Hom}  satisfies } B^* z = 0 \hbox{ in } (0,T) \hbox{ and } z(0) = 0\}.
\end{equation}
In many situations, the set $N$ in \eqref{Set-N} is reduced to $\{0\}$. This is in particular the case when $A$ generates an analytic semi-group, see \cite[Remark 17]{TrelatWangXu}. However, in the general setting we are dealing with in Theorem \ref{Thm-NullCont}, it is not clear that we can guarantee that $N$ is reduced to $\{0\}$.

In view of the observability inequality, it is natural to introduce the space
\begin{equation}
	\label{Def-X-0}
	X_0 = ( H \times \G \times \W \times L^2(0,T; H)) /\!\raisebox{-.65ex}{\ensuremath{N}},
\end{equation}
and to endow it with the norm
\begin{equation*}
	\norm{(z_T, g,w , f )}_{obs}^2
	=
	\int_0^T \norm{ B^* z(t) + g(t) }_{U}^2 \, dt 
		+ 
	 \int_0^T \norm{ f(t) + w(t)}_H^2 \, dt, 
\end{equation*}
where $z$ solves \eqref{Adjoint-NonHom}. One then easily checks that $J_{\text{nu}}$ is well-defined on $X_0$ and coercive for this norm. 

However, it is in general not true that this norm corresponds to the $H \times \G \times \W \times L^2(0,T; H)$ topology. We should thus define
\begin{equation}
	\label{X-Obs}
	X_{obs} = \overline{X_0}^{\|\cdot \|_{obs}}, 
\end{equation}
\emph{i.e.} the completion of $X_0$ in \eqref{Def-X-0} for the topology induced by the norm $\norm{\cdot }_{obs}$. 

Then, according to the observability estimate \eqref{Obs-z-0-General}, the functional $J_{\text{nu}}$ is continuous for the topology induced by $\norm{\cdot }_{obs}$. It can thus be extended by continuity to the space $X_{obs}$, and we will denote this extension by $J_{\text{nu}}$ as well with a slight abuse of notations. Besides, the observability estimate \eqref{Obs-z-0-General} also yields that the functional $J_{\text{nu}}$ is also coercive and strictly convex in $X_{obs}$. It thus admits a unique minimizer $(Z_T,G, W, F)$ in $X_{obs}$. 

Writing the corresponding Euler-Lagrange equations, we can proceed as in the proof of Lemma \ref{Lem-Min-Of-J-Controls} and deduce that, taking the control $u$ and the trajectory $y$ as in \eqref{Dictionary-y-u-Z}, we can solve the null-controllability problem \eqref{NullCont-Req} with the constraints \eqref{Moments-Of-U-in-G}, \eqref{Moments-Of-Y-in-W} on the projection of the control and of the trajectory. The only point to check is that $u$ indeed belongs to $L^2(0,T; U)$ as claimed. This is due to the fact that the minimizer $(Z_T, G, W, F)$ belongs to $X_{obs}$, so that $B^*Z + G$ is well defined as an element of $L^2(0,T; U)$. Details are left to the reader.

\subsection{Proof of Proposition \ref{Prop-From-Usual-Obs-z0-to-Obs-z0-Gal}}
\label{Subsec-Proof-Prop}
%
We place ourselves in the setting of Proposition \ref{Prop-From-Usual-Obs-z0-to-Obs-z0-Gal}. The proof of Proposition \ref{Prop-From-Usual-Obs-z0-to-Obs-z0-Gal} is rather close to the one of Lemma \ref{Lem-Obs-z-T-General}.

First, we start by remarking that one can immediately deduce from the observability inequality \eqref{Obs-z-tilde-T} for solutions of \eqref{Adjoint-Hom} that there exists a constant $C>0$ such that for all $z_T \in H$ and $f \in L^2(0,T; H)$, the solution of \eqref{Adjoint-NonHom} satisfies
\begin{equation}
	\label{Obs-z-tilde-T-Pre-0}
	\norm{ z(\tilde T)}_H \leq C \left(\norm{B^* z}_{L^2(0,T;U)} + \norm{f}_{L^2(0,T; H)} \right). 
\end{equation}

Then, to prove the observability inequality \eqref{Obs-z-0-General}, we use a contradiction argument. Namely we consider a sequence $(z_{T,n}, g_n, w_n, f_n) \in H \times \G \times \W \times L^2(0,T; H)$ such that

\begin{align}
	\label{ContradictionAss1-Null}	
	& \forall n \in \N, \quad \norm{(z_{n}(0), g_n,w_n,f_n)}_{ H \times \G \times \W \times L^2(0,T; H)} = 1, 
	\\
	\label{ContradictionAss2-Null}	
	&
	\hbox{ and } 
	\quad
	\lim_{n \to \infty} \left(\norm{ B^* z_n + g_n}_{L^2(0,T; U)} + \norm{ f_n + w_n}_{L^2(0,T; H)}\right) = 0.
\end{align}
From \eqref{ContradictionAss1-Null} and \eqref{ContradictionAss2-Null}, we deduce from \eqref{Obs-z-tilde-T-Pre-0} that $\| z_n(\tilde T) \|_H$ is uniformly bounded. 

Using thus \eqref{ContradictionAss1-Null} and \eqref{ContradictionAss2-Null} and the fact that $\G$ and $\W$ are of finite dimension, we obtain the following convergences: there exists $(z_{\tilde T}, g, w, f) \in H \times \G \times \W \times L^2(0,T; H)$ such that, up to an extraction still denoted the same, 
\begin{align}
	\label{Conv-z-T-n-null}
	&(z_{n}(\tilde T))  \underset{n\to\infty}\rightharpoonup  z_{\tilde T} && \hbox{ weakly in } H, 
	\\	
	\label{Conv-g-n-null}
	& (g_{n})  \underset{n\to\infty}\rightarrow  g && \hbox{ strongly in } L^2(0,T; U),
	\\
	\label{Conv-w-n-null}
	&(w_{n})  \underset{n\to\infty}\rightarrow  w && \hbox{ strongly in } L^2(0,T; H),
	\\
	\label{Conv-f-n-null}
	&(f_{n})  \underset{n\to\infty}\rightharpoonup  f && \hbox{ weakly in } L^2(0,T; H),
\end{align}
and,  from \eqref{ContradictionAss2-Null} and the above convergences, 
\begin{equation}
	\label{limit-tilde-t-t}
	\int_0^{\tilde T} \| B^*  z + g \|_U^2 \, dt
		+
	\int_0^T \|  f +  w \|_H^2 \, dt
	= 0, 	
\end{equation}
where $z$ is the solution of
\begin{equation*}
	\left\{ 
		\begin{array}{l}
		 z' + A^* z = f, \quad t \in (0,\tilde T),
		\\
		z(\tilde T) = z_{\tilde T}. 
		\end{array}
	\right.
\end{equation*}
Note that equation \eqref{limit-tilde-t-t} involves two integrals, one in $(0,\tilde T)$, the other one in $(0,T)$. This is due to the fact that we do not have any argument to pass to the limit in $z_n$ in the interval $(\tilde T, T)$. 

We can then use the unique continuation property \eqref{UC-Tilde} to get that $z_{\tilde T} = 0$, $g = 0$, $w = 0$ and $f = 0$. Besides, from \eqref{ContradictionAss2-Null} and the strong convergences \eqref{Conv-g-n-null}--\eqref{Conv-w-n-null} (recall that $g = 0$ and $w = 0$), we get that 
$$
	\lim_{n \to \infty} \left( \|B^* z_n\|_{L^2(0,T; U)} + \| f_n \|_{L^2(0,T; H)} \right) = 0, 
$$
so that from \eqref{Obs-z-tilde-T-Pre-0}, we obtain that 
\begin{equation*}
	\lim_{n \to \infty} \|z_n(\tilde T)\|_H = 0. 
\end{equation*}
From the two above convergences, we easily deduce that 
\begin{equation*}
	\lim_{n \to \infty} \left( \|z_n(0)\|_H + \| f_n \|_{L^2(0,T; H)} \right) = 0.
\end{equation*}
With the strong convergences \eqref{Conv-g-n-null}--\eqref{Conv-w-n-null} (recall that $g = 0$ and $w = 0$), this contradicts the assumption \eqref{ContradictionAss1-Null}. This completes the proof of Proposition \ref{Prop-From-Usual-Obs-z0-to-Obs-z0-Gal}.

\section{Examples}\label{Sec-Examples}
The goal of this section is to derive several instances in which the unique continuation property \eqref{UC} can be proved. We do not aim at giving the most general setting for each instance, but rather at describing possible strategies to prove the unique continuation property \eqref{UC} for some specific spaces $\W$ and $\G$.

\subsection{Example 1. $\W = \{0\}$ and $\G$ containing functions supported on $(0,T')$ with $T' \in (0,T)$.} 

Example $1$ focuses on the following case. We let $T>0$, $T' \in (0,T)$, and we choose
\begin{equation}
\label{Example-1}
	\G = \{g \in L^2(0,T; U), \, g = 0 \hbox{ in } (T', T)\}, 
	\qquad 
	\W = \{0 \},
\end{equation}
where $T' \in (0,T)$ is such that 
\begin{equation}
	\label{UC-classical-T'}
	\hbox{If } z \hbox{ satisfies }
	\left\{ 
		\begin{array}{ll}
		 z' + A^* z = 0, \quad & \hbox{ for } t \in (T', T),
		\\
		z(T) = z_T \in H, 
		\\
		B^* z = 0,  \quad & \hbox{ for } t \in (T',T),
		\end{array} 
	\right.	
	\qquad \hbox{ then } \qquad 
		z_T = 0.
\end{equation}
Then condition \eqref{UC} is easy to check, so that Theorem \ref{Thm-ApproxControl}  applies. 

Of course, this case is somehow a straightforward case, as the unique continuation property \eqref{UC-classical-T'} implies that given any $y_{T'}, y_1 \in H^2$, and any $\varepsilon >0$, there exists a control function $u \in L^2(T', T; U)$ such that the solution $y$ of 
\begin{equation*}
	y' = A y + B u, \quad t \in (T', T), \qquad \hbox{ with } y(T') = y_{T'}
\end{equation*}
satisfies \eqref{ApproxCont-Req}. 

Thus, if one wants to approximately control \eqref{ControlledEq} and to impose the condition $\P_\G u = g_*$ for some $g_* \in \G$, the simplest thing to do is to take $u = g_*$ in $(0,T')$, call $y_{T'}$ the state obtained by solving the equation \eqref{ControlledEq} on $(0,T')$ starting from $y_0$, and then use the above approximate controllability property to conclude the argument.
\medskip

More interesting results arise when considering exact controllability result with a subspace $\G$ of finite dimension when the time of unique continuation (equivalently of approximate controllability) and the time of observability (equivalently of exact controllability) do not coincide. 
\medskip

Such an instance is given by the wave equation. Let $\Omega$ be a smooth bounded domain of $\R^d$ and $\omega$ a non-empty subdomain of $\Omega$, and consider the controlled wave equation:
\begin{equation}
	\label{WaveEq}
		\left\{
			\begin{array}{ll}
				\partial_{tt} y - \Delta y = u \chi_\omega, \quad &  \hbox{ for } (t,x) \in (0,T) \times \Omega, 
				\\
				y(t,x) = 0, \quad & \hbox{ for } (t,x) \in (0,T) \times \partial \Omega, 
				\\
				(y(0, \cdot), \partial_t y(0, \cdot)) = (y_0, y_1), \quad &  \hbox{ for } x \in \Omega.
			\end{array}
		\right.
\end{equation}
Here, the state is given by $Y = (y, \partial_t y)$ and $u$ is the control function. The function $\chi_\omega$ is the indicator function of the set $\omega$. This system writes under the form \eqref{ControlledEq} with 
\begin{equation*}
	Y = \left( \begin{array}{c} y \\ \partial_t y \end{array} \right), 
	\quad
	A = \left( \begin{array}{cc} 0 & Id \\ \Delta & 0 \end{array} \right), 
	\quad 
	B = \left( \begin{array}{c} 0 \\ \chi_\omega \end{array} \right), 
\end{equation*}
with $H = H^1_0(\Omega) \times L^2(\Omega)$, $\mathscr{D}(A) = H^2\cap H^1_0(\Omega) \times H^1_0(\Omega)$, $U = L^2(\omega)$.
\\
The following facts are well-known:
\begin{itemize}
	\item The wave equation \eqref{WaveEq} is approximately controllable if 
	$$
		T > T_{AC} : = 2 \sup_{\Omega} d(x,\omega), 
	$$
	where $d$ denotes the geodesic distance in $\Omega$. This is a consequence of Holmgren's uniqueness theorem, see \cite{Russell-1971,Russell-1971-II}. 
	\item The wave equation \eqref{WaveEq} is exactly controllable in time $T$ if (and only if) the so-called geometric control condition is satisfied, see \cite{Bardos,Bardos-L-R,BurqGerard}. Roughly speaking, this consists in saying that all the rays of geometric optics meet the domain $\omega$ before the time $T$. We call $T_{EC}$ the critical time for exact controllability.
\end{itemize}
Of course, the critical time of exact controllability is always larger than the one of approximate controllability, but our results will be of interest only when $T_{EC}$ is strictly larger than $T_{AC}$, that we assume from now on. This happens, for instance, in the case of the wave equation in the $2$-d unit square $\Omega = (0,1)^2$ observed from a $\delta$-neighborhood of two consecutive sides, in which $T_{AC} = 2(1- \delta)$ and $T_{EC} = 2 \sqrt{2(1- \delta)}$. 
\\
We then choose:
\begin{multline}
	\label{Setting-T-ec-T-ac}
	T > T_{EC}, \quad T' \in (0,T) \hbox{ such that } T- T' > T_{AC}, 
	\\
	\G\,  \hbox{ a finite dimensional subspace of } \{g \in L^2(0,T; L^2(\omega)), \, g = 0 \hbox{ in } (T', T)\times \omega\}, 
	\quad
	\W = \{0\}.
\end{multline}
We then claim the following result: 
\begin{theorem}
	\label{Thm-Wave}
	Let us consider the setting of \eqref{Setting-T-ec-T-ac}. Then for all $(y_0, y_1) \in H^1_0(\Omega) \times L^2(\Omega)$ and $(y_0^T, y_1^T)\in H^1_0(\Omega) \times L^2(\Omega)$, for all $g \in \G$, there exists a control function $u \in L^2(0,T; L^2(\omega))$ such that the solution $y$ of \eqref{WaveEq} starting from $(y_0, y_1)$ satisfies $(y(T), \partial_t y(T)) = (y_0^T, y_1^T)$ in $\Omega$ and $\P_\G u =  g$. 
\end{theorem}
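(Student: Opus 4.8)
The plan is to obtain Theorem \ref{Thm-Wave} as a direct application of Theorem \ref{Thm-ExactCont} to the wave system written in the abstract form \eqref{ControlledEq}, with $H = H^1_0(\Omega)\times L^2(\Omega)$, $U = L^2(\omega)$, $\G$ as in \eqref{Setting-T-ec-T-ac} and $\W = \{0\}$. Since $\W = \{0\}$, the constraint \eqref{Moments-Of-Y-in-W} is vacuous, and $\W$ is trivially finite dimensional; $\G$ is finite dimensional by assumption. Thus the only two hypotheses of Theorem \ref{Thm-ExactCont} that need to be verified in this setting are: (i) the observability inequality \eqref{Obs-z-T} for the homogeneous adjoint system in time $T$, and (ii) the unique continuation property \eqref{UC} for the (possibly nonhomogeneous in the sense of the linear relation on $B^*z$, but here homogeneous since $\W=\{0\}$) adjoint system.

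First I would dispose of (i). The adjoint of the wave operator is again (up to time reversal and sign conventions) a wave equation, and $B^* Y = y_t \chi_\omega$ reads off the velocity component on $\omega$. Since $T > T_{EC}$, the geometric control condition holds, hence the observability inequality \eqref{Obs-z-T} $\norm{z_T}_H \le C\norm{B^*z}_{L^2(0,T;U)}$ is exactly the classical observability estimate for the wave equation of Bardos--Lebeau--Rauch \cite{Bardos-L-R,Bardos}, which I would simply cite. This step is routine.

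The heart of the matter is (ii), the unique continuation property \eqref{UC}. Here it specializes to: if $z$ solves $z' + A^*z = 0$ on $(0,T)$ with $z(T) = z_T \in H$ and $B^*z = g$ on $(0,T)$ with $g \in \G$, then $z_T = 0$ and $g = 0$. The key observation is that every $g \in \G$ vanishes on $(T',T)$ by \eqref{Setting-T-ec-T-ac}. So on the subinterval $(T',T)$ we have $B^*z = 0$, i.e.\ the velocity of the free adjoint wave vanishes on $(T',T)\times\omega$; since $T - T' > T_{AC}$, Holmgren's theorem (as used for approximate controllability, cf.\ \cite{Russell-1971,Russell-1971-II}, or condition \eqref{UC-classical-T'}) forces $z_T = 0$, hence $z \equiv 0$ on all of $(0,T)$, and then $g = B^*z = 0$. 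So \eqref{UC} holds. I would phrase this carefully: the point is that the \emph{weak} constraint on $g$ (support in the short time window where we still have enough time for Holmgren) is exactly what makes \eqref{UC} follow from \eqref{UC-classical-T'}. The main subtlety to be careful about — really the only one — is the bookkeeping of the two critical times: one needs $T > T_{EC}$ to get observability on $(0,T)$, and simultaneously $T - T' > T_{AC}$ on the remaining window $(T',T)$; these are compatible precisely because $T_{EC} > T_{AC}$ was assumed, and because one is free to pick $T$ large and then $T'$ accordingly.

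Having checked (i) and (ii), Theorem \ref{Thm-ExactCont} applies verbatim and yields, for any initial data $(y_0,y_1)$ and target $(y_0^T,y_1^T)$ in $H^1_0(\Omega)\times L^2(\Omega)$ and any $g\in\G$, a control $u\in L^2(0,T;L^2(\omega))$ with $(y(T),\partial_t y(T)) = (y_0^T,y_1^T)$ and $\P_\G u = g$, which is exactly the assertion of Theorem \ref{Thm-Wave}. I do not expect any genuine obstacle beyond translating the wave equation into the abstract framework (identifying $A^*$, $B^*$, and checking $B\in\mathscr L(U;H)$, which is immediate since $\chi_\omega\in\mathscr L(L^2(\omega);L^2(\Omega))$) and the time-bookkeeping described above.
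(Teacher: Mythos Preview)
Your proposal is correct and follows the same route as the paper: apply Theorem~\ref{Thm-ExactCont}, verify the observability inequality via Bardos--Lebeau--Rauch using $T>T_{EC}$, and verify \eqref{UC} by observing that every $g\in\G$ vanishes on $(T',T)$ so that Holmgren on that window (with $T-T'>T_{AC}$) forces $z_T=0$ and hence $g=0$. The only cosmetic difference is the duality convention: the paper uses the $L^2$-pivot, placing the adjoint state in $L^2(\Omega)\times H^{-1}(\Omega)$ with observation $B^*z=z|_\omega$, whereas you identify $H$ with its own dual and read $B^*z=\partial_t z|_\omega$; both frameworks are standard and the required observability and unique-continuation facts are classical in either.
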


This result is an easy consequence of Theorem \ref{Thm-ExactCont} in the setting corresponding to the wave equation. We shall thus check the main two assumptions in Theorem \ref{Thm-ExactCont}. The first one is that the unique continuation property \eqref{UC-classical-T'} holds for the adjoint of the wave equation, which is given by 
\begin{equation}
	\label{WaveEq-Adj}
		\left\{
			\begin{array}{ll}
				\partial_{tt} z - \Delta z = 0 , \quad &  \hbox{ for }(t,x) \in (0,T) \times \Omega, 
				\\
				z(t,x) = 0, \quad &  \hbox{ for }(t,x) \in (0,T) \times \partial \Omega, 
				\\
				(z(T, \cdot), \partial_t z(T, \cdot)) = (z_0, z_1), &  \hbox{ for }x \in \Omega.
			\end{array}
		\right.
\end{equation}
The usual choice when dealing with the wave equation is to identify $L^2(\Omega)$ with its dual, see e.g. \cite{Lions}. Therefore, if the controlled trajectory takes value in $H^1_0(\Omega) \times L^2(\Omega)$, the solutions of the adjoint equation have to be considered as taking values in $L^2(\Omega) \times H^{-1}(\Omega)$. This slight shift of spaces does not modify at all the validity of the abstract arguments developed in Theorem \ref{Thm-ExactCont}.

Holmgren's uniqueness theorem \cite{John-1949,Hormander-I} gives that if a solution $z$ of \eqref{WaveEq-Adj} with initial datum in $L^2(\Omega) \times H^{-1}(\Omega)$ satisfies $z (t,x) = 0$ for $(t,x) \in (T',T) \times \omega$, then $z$ vanishes identically on $(T',T) \times \Omega$. Therefore, the unique continuation property \eqref{UC} is satisfied here.
\\
The second assumption to check is the following observability inequality: there exists $C>0$ such that for all solutions $z$ of \eqref{WaveEq-Adj} with $(z_0, z_1) \in L^2(\Omega) \times H^{-1}(\Omega)$, 
\begin{equation}
	\label{Obs-Waves}
	\norm{ (z_0, z_1)}_{L^2(\Omega) \times H^{-1}(\Omega)} \leq C \norm{ z \chi_\omega}_{L^2((0,T) \times \omega)}. 
\end{equation}
This inequality is equivalent to the exact controllability of the wave equation (see \cite{Lions}), and is thus satisfied when the wave equation is exactly controllable, \emph{i.e.} when $T > T_{EC}$ (In fact, the proof of exact controllability of the wave equation in \cite{Bardos,Bardos-L-R} relies on the proof of \eqref{Obs-Waves}).

On this example, it clearly appears that $\G$ has to be of finite dimension. Indeed, Theorem \ref{Thm-Wave} could not be true when taking  $\G =  \{g \in L^2(0,T; L^2(\omega)), \, g = 0 \hbox{ in } (T', T)\}$. Otherwise, imposing $\P_\G u = 0$, one should be able to steer solutions of \eqref{WaveEq} from any initial datum to any final datum in $H^1_0(\Omega) \times L^2(\Omega)$ with controls vanishing on $(0,T')$. It is then not difficult to check that this would imply exact controllability of the wave equation in time $T - T'$, which could be taken smaller than $T_{EC}$ if $T_{AC} < T_{EC}$.

\subsection{Example 2. The case $\G = \{0\}$, when $B^*z  = 0 $ implies that $w = 0$, and related examples}\label{Subsec-Ex2}

To start with, we consider the case $\G = \{ 0 \}$.

To motivate the functional setting given afterwards, we start with the example of the heat equation in $\Omega$ with a distributed control supported in a non-empty open subset $\omega$, namely
\begin{equation}
	\label{HeatEq}
		\left\{
			\begin{array}{ll}
				\partial_{t} y - \Delta y = u \chi_\omega, \quad & \hbox{ for } (t,x) \in (0,T) \times \Omega, 
				\\
				y(t,x) = 0, \quad & \hbox{ for } (t,x) \in (0,T) \times \partial \Omega, 
				\\
				y(0, \cdot) = y_0, & \hbox{in } \Omega,
			\end{array}
		\right.
\end{equation}
and we assume that the vector space $\W$ is a vector space of $L^2(0,T; L^2(\Omega))$ such that
\begin{equation}
	\label{Cond-W-Ex2}
	\Pi_{\omega} : f \mapsto f|_{\omega} \hbox{ satisfies } \hbox{Ker\,}(\Pi_{\omega}|_{\W} ) = \{0\}.
\end{equation}   
Here, we did not make precise the space on which $\Pi_{\omega}$ is defined, but in view of our applications below, it is natural to define it as going from $L^2(0,T; L^2(\Omega))$ to $L^2(0,T; L^2(\omega))$. Note that this example is borrowed from \cite{Nakoulima-2004,Mophou-Nakoulima-2008,Mophou-Nakoulima-2009} and is also closely related to the condition (6.5) in \cite{Lions-Sentinelles-1992}. 

To make it fit into the abstract setting of \eqref{ControlledEq}, it suffices to take 
\begin{equation*}
	A = \Delta, \ \hbox{ in } H = L^2(\Omega), \hbox{ with } \mathscr{D}(A) = H^2 \cap H^1_0(\Omega), 
	\quad 
	\hbox{ and } 
	\quad
	B = \chi_\omega, \ \hbox{ with } U = L^2(\omega).
\end{equation*}

The corresponding unique continuation property \eqref{UC} writes as follows: if $z$ satisfies
\begin{equation}
	\label{HeatEq-Adj-Ex2}
		\left\{
			\begin{array}{ll}
				\partial_{t} z + \Delta z = w, \quad &  \hbox{ for } (t,x) \in (0,T) \times \Omega, 
				\\
				z(t,x) = 0, \quad &  \hbox{ for }(t,x) \in (0,T) \times \partial \Omega, 
				\\
				z(T, \cdot) = z_T, & \hbox{in } \Omega, 
			\end{array}
		\right.
\end{equation}
with $z_T \in L^2(\Omega)$ and $w \in \W$, and 
\begin{equation}
	\label{UC-heat-Ex2}
	z(t,x) = 0 \quad \hbox{ in } (0,T) \times \omega, 
\end{equation}
then one should have $w = 0$ and $z_T = 0$. This is indeed the case provided \eqref{Cond-W-Ex2} holds. Indeed, \eqref{UC-heat-Ex2} implies that $\partial_t z + \Delta z = 0$ in $H^{-2}((0,T) \times \omega)$, so that $\Pi_{\omega}(w) = 0$, and thus from \eqref{Cond-W-Ex2}, $w = 0$. One can then deduce that $z = 0$ in $(0,T) \times \Omega$ from classical unique continuation properties for the heat equation. 

We can then use that the heat equation is null-controllable in any time $T>0$, see \cite{FursikovImanuvilov,LebRob}, or equivalently final state observable at any time, meaning that for all $T>0$, there exists $C$ such that any solution $z$ of \eqref{HeatEq-Adj-Ex2} with $z_T \in L^2(\Omega)$ and $w = 0$ satisfies 
$$
	\norm{z(0)}_{L^2(\Omega)} \leq C \norm{z \chi_\omega}_{L^2((0,T) \times \omega)}.
$$
In particular, for any time $\tilde T \in (0,T)$, there exists a constant $C(\tilde T)$ such that any solution $z$ of \eqref{HeatEq-Adj-Ex2} with $z_T \in L^2(\Omega)$ and $w = 0$ satisfies 
$$
	\norm{z(\tilde T)}_{L^2(\Omega)} \leq C(\tilde T) \norm{z \chi_\omega}_{L^2((0,T) \times \omega)}.
$$

Therefore, combining Proposition \ref{Prop-From-Usual-Obs-z0-to-Obs-z0-Gal} and Theorem \ref{Thm-NullCont}, we get that if $\W$ is of finite dimension and satisfies condition \eqref{Cond-W-Ex2}, then for any $y_0 \in L^2(\Omega)$ and $w \in \W$, there exists a control function $u \in L^2(0,T; L^2(\omega))$ such that the controlled trajectory $y$ of \eqref{HeatEq} starting from $y_0$ satisfies $y(T) = 0$ and $\P_\W y= w$. (The existence of $T' < T$ such that $\Pi_{\omega, T'} : f \mapsto  f|_{\omega \times (0,T')}$ satisfies  $\hbox{Ker\,}(\Pi_{\omega,T'}|_{\W} ) = \{0\}$ can be proved easily by contradiction using the fact that $\hbox{Ker\,}(\Pi_{\omega,T}|_{\W} ) = \{0\}$ and $\W$ is of finite dimension.)
\medskip

In fact, the above example can be put into a much more abstract setting, by assuming that the space $\W$ is such that 

\begin{equation}
	\label{Cond-Mophou}
	\exists \text{ two linear operators } K \text{ and } L \text{ s.t. }
	\left\{
	\begin{array}{l}
		\ds K: L^2(0,T; H) \mapsto \mathcal{H} \hbox{ for some Hilbert space } \mathcal{H}, 
		\\
		\ds L : L^2(0,T; U) \mapsto \mathcal{H}, 
		\\
		\ds K (\partial_t + A^* )  = L B^* ,
		\\
		\ds \hbox{Ker\,} (K|_{\W}) = \{0\}.
	\end{array}
	\right.
\end{equation}
The above example fits into this setting with $K$ being the restriction operator to $(0,T) \times \omega$ and $L =  (\partial_t + \Delta)$ and $\mathcal{H} = H^{-2}((0,T)\times\omega)$.

Now, if $\W$ satisfies \eqref{Cond-Mophou} and for some $w \in \W$ and $z \in C^0([0,T]; H)$, 
$$
	(\partial_t + A^* )z = w \quad \hbox{ and } \quad B^* z = 0, \quad \hbox{ in } (0,T), 
$$
then 
$$
	K w = K ((\partial_t +A^*) z) = L B^* z = 0, 
$$
so that $w = 0$ according to the condition $ \hbox{Ker\,} (K|_{\W}) = \{0\}$. In particular, the classical unique continuation property \eqref{UC-Classical} would then imply the more evolved unique continuation property \eqref{UC}.

Of course, there is a completely symmetric statement when considering $\W = \{0\}$ and $\G$ such that 
\begin{equation}
	\label{Cond-Mophou-bis}
	\exists \text{ two linear operators } K \text{ and } L \text{ s.t. }
	\left\{
	\begin{array}{l}
		\ds K: L^2(0,T; H) \mapsto \mathcal{H} \hbox{ for some Hilbert space } \mathcal{H}, 
		\\
		\ds L : L^2(0,T; U) \mapsto \mathcal{H}, 
		\\
		\ds K (\partial_t + A^* )  = L B^* ,
		\\
		\ds \hbox{Ker\,} (L|_{\G}) = \{0\}.
	\end{array}
	\right.
\end{equation}
In the above example \eqref{HeatEq} for instance, when $\G = \hbox{Span\,} \{g \}$ for some $g \in L^2(0,T; L^2(\omega))$ such that $\norm{(\partial_t + \Delta) g}_{H^{-2}((0,T)\times \omega)} \neq 0$, then the above condition is satisfied with $K: L^2( 0,T; L^2( \Omega)) \to H^{-2}((0,T) \times \omega)$ the usual restriction operator and $L = \partial_t + \Delta : L^2(0,T; L^2(\omega)) \to H^{-2}((0,T) \times \omega)$.

In fact, under condition \eqref{Cond-Mophou} or \eqref{Cond-Mophou-bis}, it is not even needed to assume that one of the two vector spaces $\W$ or $\G$ is reduced to $\{0\}$ if we have that $(w, g) \in \W \times \G \mapsto K w + L g$ is injective.

\subsection{Example 3. Using time differentiation to go back to a classical unique continuation property}\label{Subsec-Ex3}

In order to motivate the introduction of our abstract setting, let us consider again the heat equation \eqref{HeatEq} in the case $\W = \hbox{Span\,} \{ e^{\mu t} w_\mu(x) \}$ for some $\mu \in \R$ and $w_\mu \in L^2(\Omega)$ (In fact, this example is inspired by the previous work \cite[Theorem 4.4 and Lemma 4.6]{Chowdhury-Erv-2019}). Then to prove the unique continuation property \eqref{UC}, we want to prove that if $z$ satisfies
\begin{equation}
	\label{HeatEq-Adj-Ex3}
		\left\{
			\begin{array}{ll}
				\partial_{t} z + \Delta z = a_\mu e^{\mu t} w_\mu, \quad & \hbox{ for } (t,x) \in (0,T) \times \Omega, 
				\\
				z(t,x) = 0, \quad &  \hbox{ for }(t,x) \in (0,T) \times \partial \Omega, 
				\\
				z(T, \cdot) = z_T, & \hbox{in } \Omega, 
			\end{array}
		\right.
\end{equation}
with $z_T \in L^2(\Omega)$ and $a_\mu \in \R$, and 
\begin{equation*}
	z(t,x) = 0 \quad \hbox{ in } (0,T) \times \omega, 
\end{equation*}
then $a_\mu = 0$ and $z$ vanishes everywhere in $(0,T) \times \Omega$.

In order to solve this problem, the basic idea is again to ``kill'' the source term by applying a suitable operator. Here, in view of the time dependence of the source term, it is natural to apply $\partial_t - \mu$ to the equation \eqref{HeatEq-Adj-Ex3}. In particular, if we set $\tilde z = (\partial_t - \mu) z$, we obtain that 
\begin{equation*}
		\left\{
			\begin{array}{ll}
				\partial_{t} \tilde z + \Delta  \tilde z = 0, \quad &  \hbox{ for }(t,x) \in (0,T) \times \Omega, 
				\\
				\tilde z(t,x) = 0, \quad &  \hbox{ for }(t,x) \in (0,T) \times \partial \Omega,
			\end{array}
		\right.
		\quad \hbox{ and } \quad 
		\tilde z(t,x) = 0 \quad \hbox{ for }(t,x) \in (0,T) \times \omega. 
\end{equation*}
Using then the classical unique continuation property for the heat equation, we deduce that $\tilde z = 0$ in $(0,T) \times \Omega$, so that $\partial_t z = \mu z$. In particular, this implies that there exists a function $z_\mu \in H^1_0(\Omega)$ such that for all $(t,x) \in (0,T) \times \Omega$, $z(t,x) = e^{\mu t} z_{\mu}(x)$. According to \eqref{HeatEq-Adj-Ex3}, we then deduce
\begin{equation}
	\label{Stationary-mu}
		\left\{
			\begin{array}{ll}
				(\mu +\Delta) z_\mu = a_\mu w_\mu, \quad &  \hbox{ for }x\in \Omega, 
				\\
				z_\mu (x) = 0, \quad &  \hbox{ for }x \in \partial \Omega, 
			\end{array}
		\right.
		\quad \hbox{ and } \quad 
		z_\mu(x) = 0 \quad \hbox{ for }  x \in \omega. 
\end{equation}
Therefore, to conclude the argument, \emph{i.e.} to deduce that $z_\mu = 0$ in $\Omega$, we need to assume that $w_\mu$ is such that 
there are no solution $Z_\mu$ of 
\begin{equation}
	\label{Stationary-z-mu}
		\left\{
			\begin{array}{ll}
				(\mu +\Delta) Z_\mu =  w_\mu, \quad &  \hbox{ for }x\in \Omega, 
				\\
				Z_\mu (x) = 0, \quad &  \hbox{ for }x \in \partial \Omega, 
			\end{array}
		\right.
\end{equation}
which satisfies $Z_\mu = 0$ in $\omega$. 

In our setting, we shall thus distinguish two cases depending if $\mu$ belongs to the spectrum of the Laplacian or not. Let us denote by $\lambda_j$ the family of eigenvalues of the Laplace operator $A = - \Delta$ defined on $H= L^2(\Omega)$ with domain $\mathscr{D}(A) = H^2 \cap H^1_0(\Omega)$, indexed in increasing order $0 < \lambda_1 \leq \lambda_2 \leq \cdots \leq \lambda_j \leq \to \infty$, and $H_j = \hbox{Ker\,}(\Delta + \lambda_j)$ the corresponding eigenspace.

If $\mu \notin \{ \lambda_j,\ j \in \N\}$, then the solution of \eqref{Stationary-z-mu} is unique and therefore we shall ask that $\| Z_\mu \|_{L^2(\omega)} \neq 0$.

If $\mu= \lambda_j$, then according to the Fredholm alternative (see e.g. \cite[Appendix D, Section D.5]{EvansPDE}),
\begin{itemize}
	\item If $\P_{H_j} w_\mu \neq 0$, then there is no solution $Z_\mu$ of \eqref{Stationary-z-mu} (where $\P_{H_j} $ is the orthogonal projection on $H_j$).
	\item If $\P_{H_j} w_\mu = 0$, then any solution $Z_\mu$ of \eqref{Stationary-z-mu} writes $Z_\mu^* + \Phi_j$, where $\P_{H_j} (Z_\mu^* ) = 0$ and $Z_\mu^*$ solves \eqref{Stationary-z-mu}, and with $\Phi_j \in H_j$. Therefore, to guarantee that there are no solution $Z_\mu$ of \eqref{Stationary-z-mu} such that $Z_\mu = 0$ in $\omega$, we should assume that
	$$
	\inf_{\Phi_j \in H_j} \norm{ Z_\mu^* +\Phi_j}_{L^2(\omega)} >0.
	$$
\end{itemize}

In the above example, we made the choice of presenting what happens when $\G = \{0\}$ and $\W = \{ e^{\mu_t} w_\mu\}$, but in fact the strategy developed is much more general. 

Namely, we have the following result:

\begin{theorem}
	\label{Thm-Ex3}
		Assume (H1)--(H2), and let $A$ be the generator of an analytic semigroup on $H$. 
		
		Let $K \in \N$, $(\mu_k)_{k \in \{1, \cdots, K\}}$ be a family of real numbers two by two distinct, $\mathcal{W}_k$ be a family of closed vector spaces included in $H$ such that 
		\begin{equation}
			\label{Abstract-z-mu-k}
			\text{Any function $z$ satisfying } (\mu_k + A^*) z  \in \mathcal{W}_k \ \hbox{ and } \ B^* z = 0
			\ \text{ vanishes identically}, 
		\end{equation}
		and set 
		\begin{equation*}
			\W = \hbox{Span\,}\{ e^{\mu_k t} w_k, \, \hbox{ for } k \in \{1, \cdots, K\},  \hbox{ and } w_k \in \mathcal{W}_k \}.
		\end{equation*}
		
		Let $J \in \N$, $(\rho_j)_{j \in \{1, \cdots, J\}}$ be a family of real numbers two by two distinct, $\mathcal{G}_j$ be a family of closed vector spaces included in $U$ such that		
		\begin{equation}
			\label{Abstract-z-rho-j}
			\text{Any function $z$ satisfying } (\rho_j + A^*) z = 0 \ \hbox{ and } \ B^* z \in \mathcal{G}_j
			\ \text{ vanishes identically,} 
		\end{equation}
		and set 
		\begin{equation*}
			\G = \hbox{Span\,}\{ e^{\rho_j t} g_j, \, \hbox{ for } j \in \{1, \cdots, J\},  \hbox{ and } g_j \in \mathcal{G}_j \}.
		\end{equation*}

		We also assume that 
		\begin{equation}
			\label{EmptyCap}
			\{ \mu_k, \, k \in \{1, \cdots, K\} \} \cap \{ \rho_j, \, j \in \{1, \cdots, J\} \} = \emptyset.
		\end{equation}
		
		Finally, we also assume that the classical unique continuation property \eqref{UC-Classical} holds.

		Then the unique continuation property \eqref{UC} is satisfied. 
\end{theorem}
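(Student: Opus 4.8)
The strategy is to eliminate the exponential forcing term and the exponential feedback term by one and the same constant–coefficient differential operator in time, thereby reducing everything to the classical unique continuation property \eqref{UC-Classical}, and then to read off the conclusion mode by mode. So fix $(z_T,g,w)\in H\times\G\times\W$ and let $z$ be the corresponding solution of \eqref{AdjointEq}--\eqref{LinearRelation}. Write $w=\sum_{k=1}^K e^{\mu_k t}w_k$ with $w_k\in\mathcal W_k$ and $g=\sum_{j=1}^J e^{\rho_j t}g_j$ with $g_j\in\mathcal G_j$. Since $A$, hence $A^*$, generates an analytic semigroup and the source $w$ is entire in $t$, the solution $z$ is real-analytic in time on the open interval $(0,T)$ with values in $H$, and interior parabolic smoothing gives $z(t)\in\mathscr D(A^*)$ for every $t\in(0,T)$. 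Introduce
\[
\mathcal P(X)=\prod_{k=1}^K(X-\mu_k)\,\prod_{j=1}^J(X-\rho_j),
\]
which, by \eqref{EmptyCap} together with the pairwise distinctness of the $\mu_k$ and of the $\rho_j$, has exactly $K+J$ simple roots. Set $\zeta=\mathcal P(\partial_t)z$ on $(0,T)$. Since $\partial_t$ commutes with $A^*$ along smooth trajectories and $\mathcal P(\partial_t)e^{\mu_k t}=\mathcal P(\mu_k)e^{\mu_k t}=0$, $\mathcal P(\partial_t)e^{\rho_j t}=\mathcal P(\rho_j)e^{\rho_j t}=0$, the function $\zeta$ satisfies $\zeta'+A^*\zeta=\mathcal P(\partial_t)w=0$ on $(0,T)$ and $B^*\zeta=\mathcal P(\partial_t)(B^*z)=\mathcal P(\partial_t)g=0$ on $(0,T)$.

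Next I would invoke the classical unique continuation property on interior subintervals. Fix $0<\delta<T_1<T$. By the interior smoothing just mentioned, $z$ is smooth up to $[\delta,T_1]$, hence so is $\zeta$, so that $\zeta$ restricted to $[\delta,T_1]$ is a genuine solution of the homogeneous adjoint equation with terminal datum $\zeta(T_1)\in H$ and with $B^*\zeta=0$ on $(\delta,T_1)$. On the other hand, because the semigroup is analytic, the solution of the homogeneous adjoint equation is real-analytic in time, so $B^*z$ vanishing on a subinterval forces it to vanish on the whole interval; consequently \eqref{UC-Classical} — which, by \cite[Theorem 11.2.1]{TWBook}, amounts to requiring that $B^*z\equiv0$ force $z_T=0$ — remains valid with $(0,T)$ replaced by any time interval of positive length. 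Applying it on $(\delta,T_1)$ (equivalently, after translation, on $(0,T_1-\delta)$) yields $\zeta\equiv0$ on $(\delta,T_1)$. Since $\delta$ and $T_1$ were arbitrary, $\mathcal P(\partial_t)z=0$ on all of $(0,T)$.

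Because $\mathcal P$ has simple roots, this forces $z(t)=\sum_{k=1}^K e^{\mu_k t}a_k+\sum_{j=1}^J e^{\rho_j t}b_j$ on $(0,T)$ for some $a_k,b_j\in H$; these vectors are fixed linear combinations of the values of $z$ at $K+J$ suitably chosen distinct interior times (the corresponding exponential Vandermonde matrix being invertible since the exponents are distinct), hence they belong to $\mathscr D(A^*)$. Substituting this expansion into $z'+A^*z=w$ and into $B^*z=g$, and using the linear independence of the real exponentials $\{e^{\mu_k t}\}_k\cup\{e^{\rho_j t}\}_j$ on $(0,T)$ to identify coefficients, one gets, for every $k$ and every $j$,
\[
(\mu_k+A^*)a_k=w_k,\qquad B^*a_k=0,\qquad (\rho_j+A^*)b_j=0,\qquad B^*b_j=g_j.
\]
Now hypothesis \eqref{Abstract-z-mu-k} applied with $a_k$ gives $a_k=0$, hence $w_k=(\mu_k+A^*)a_k=0$; and hypothesis \eqref{Abstract-z-rho-j} applied with $b_j$ gives $b_j=0$, hence $g_j=B^*b_j=0$. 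Therefore $w=0$, $g=0$, and $z\equiv0$ on $(0,T)$, whence $z_T=z(T)=0$ by continuity. This is precisely the conclusion \eqref{Unique-Continuation}, so \eqref{UC} holds.

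The main obstacle is the boundary regularity: $\zeta=\mathcal P(\partial_t)z$ need not extend continuously to $t=0$ or $t=T$ (the interior smoothing of an analytic semigroup does not reach the endpoints, where only the $H$-valued data are prescribed), so \eqref{UC-Classical} cannot be applied naively on $(0,T)$. This is exactly why the argument is routed through arbitrarily short interior subintervals $(\delta,T_1)$, which is legitimate precisely because analyticity of the semigroup makes the classical unique continuation property insensitive to the length of the observation interval. A secondary, more routine point is the justification of the coefficient identification from the linear independence of distinct real exponentials, together with the membership $a_k,b_j\in\mathscr D(A^*)$, which is what allows one to split the single operator equation into the $K+J$ stationary problems to which \eqref{Abstract-z-mu-k} and \eqref{Abstract-z-rho-j} apply.
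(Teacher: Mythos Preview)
Your proof is correct and follows essentially the same route as the paper's: apply the operator $\mathcal P(\partial_t)=\prod_k(\partial_t-\mu_k)\prod_j(\partial_t-\rho_j)$ to kill both $w$ and $g$, use analyticity of the semigroup together with \eqref{UC-Classical} to conclude $\mathcal P(\partial_t)z\equiv 0$, then decompose $z$ into exponentials and identify coefficients mode by mode via \eqref{Abstract-z-mu-k}--\eqref{Abstract-z-rho-j}. The only cosmetic difference is in how the endpoint regularity issue is handled: the paper time-translates by setting $\tilde p(T)=Pz(T')$ and applying \eqref{UC-Classical} on the full interval $(0,T)$, whereas you argue directly that analyticity makes \eqref{UC-Classical} valid on any subinterval and then work on $(\delta,T_1)$ --- both devices rest on the same analytic continuation of $t\mapsto B^*z(t)$, and your added Vandermonde justification of $a_k,b_j\in\mathscr D(A^*)$ is a detail the paper leaves implicit.
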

Before going into the proof of Theorem \ref{Thm-Ex3}, let us comment the assumptions. 

Assumption \eqref{Abstract-z-mu-k} is similar to the requirement given in the above example that any $z$ solving \eqref{Stationary-mu} should vanish identically. In fact, as stated above, if $\mathcal{W}_k$ is a one-dimensional vector space $\hbox{Span\,} (w_k)$ and $\mu_k$ does not belong to the spectrum of $-A^*$, then one only has to check that the solution $z_k$ of $(\mu + A^*) z_k = w_k$ satisfies $\norm{B^* z_k}_U \neq 0$. 

Assumption \eqref{Abstract-z-rho-j} is slightly different. In particular, if $\rho_j$ does not belong to the spectrum of $-A^*$, there are no non-trivial solution of $(\rho_j + A^*) z = 0$ so that condition \eqref{Abstract-z-rho-j} is automatically satisfied. 

Finally, the condition \eqref{EmptyCap} is there to avoid coupling of the elements of $\G$ and $\W$. Otherwise, it could be replaced by the following condition: If $\nu \in \{ \mu_k, \, k \in \{1, \cdots, K\} \} \cap \{ \rho_j, \, j \in \{1, \cdots, J\} \} $, then we write $\nu= \mu_k = \rho_j$ and we should assume that
$$
		\text{Any function $z$ satisfying } (\nu + A^*) z  \in \mathcal{W}_k \ \hbox{ and } \ B^* z \in \mathcal{G}_j
		\ \text{ vanishes identically}. 
$$

Let us also comment that we assumed in Theorem \ref{Thm-Ex3} that all the coefficients are real because we are thinking at real vector spaces, but the coefficients $\mu_k$ and $\rho_j$ can of course be taken as complex numbers provided we endowed our functional setting with a complex structure.

\begin{proof}
	We consider a solution $z$ of \eqref{AdjointEq} with some source term $w \in \W$, satisfying \eqref{LinearRelation} for some $g \in \G$. 

	In order to prove Theorem \ref{Thm-Ex3}, the basic idea is to apply the operator 
	$$
		P = \prod_{k = 1}^K (\partial_t - \mu_k) \prod_{j = 1}^J (\partial_t - \rho_j)
	$$
	to $z$, since for any $w \in \W$ and $g \in \G$, $P w = 0$ and $P g = 0$. 

	To justify this computation, we need to guarantee that $z$ has some nice time regularity properties. This is guaranteed by the fact that the semigroup of generator $A$ is analytic, so that $z$ is in fact analytic in time on $[0,T)$ (recall that the source term of $z$ solution of \eqref{AdjointEq} is an element of $\W$, which is also analytic in time).
	
	Therefore, one should have that 
	$$
		(Pz)' + A^* (Pz) = 0 \quad t \in (0,T),\quad \hbox{ and } \quad B^* Pz = 0 \hbox{ on } (0,T), 
	$$
	while $Pz \in C^0([0,T), H)$. Using that $A$ (thus $A^*$) generates an analytic semi-group, we deduce that for all $T' \in (0,T)$, $\tilde p$ given as the solution of 
	$$
		\tilde p' + A^* \tilde p = 0 \quad  \hbox{ for } t \in (0,T), \quad \hbox{ with } \quad \tilde p(T) = Pz(T'), 
	$$ 
	satisfies $B^* \tilde p = 0$ in $(0,T)$. Hence the unique continuation condition \eqref{UC-Classical} implies that $\tilde p = 0$ on $(0,T)$, and in particular that $Pz = 0$ in $[0,T')$. As $T'$ is arbitrary in $(0,T)$, $P z = 0$ in $[0,T)$.
		
	It then follows that there exists $(z_k)_{k \in \{1, \cdots, K\}}$ and $(\tilde z_j)_{j \in \{1, \cdots, J\} } $ in $H$ such that for $t \in [0,T)$,
	$$
		z(t) = \sum_{k = 1}^K z_k e^{\mu_k t} + \sum_{j = 1}^J \tilde z_j e^{\rho_j t}.
	$$
	Now, writing that $z$ solves \eqref{AdjointEq} with some source term $w \in \W$, and satisfies \eqref{LinearRelation} with some $g \in \G$, using the definitions of $\W$ and $\G$ and the independence of the family $(e^{\mu_k t})_{ k \in \{1, \cdots, K\} }, (e^{\rho_j t})_{j \in \{1, \cdots, J\} }$, we see that each $z_k$ should satisfy
	$$
		(\mu_k + A^* ) z_k \in \W_k, \quad \hbox{ and } \quad B^* z_k = 0, 
	$$ 
	while each $\tilde z_j$ should satisfy
	$$
		(\rho_j + A^*) \tilde z_j = 0, \quad \hbox{ and } \quad B^* \tilde z_j \in \G_j. 
	$$
	Conditions \eqref{Abstract-z-mu-k} and \eqref{Abstract-z-rho-j} then imply that $z$ vanishes identically, and thus that $z_T = 0$, $g = 0$ and $w = 0$. This proves the unique continuation property \eqref{UC}.
\end{proof}

\begin{remark}
	In fact, the above proof works as well if $\W$ and $\G$ are such that there exists an interval of time $(T_1, T_2) \subset [0,T]$ such that 
	\begin{align*}
		&\W = \hbox{Span\,}\{ f_k(t) w_k, \, k \in \{1, \cdots, K\},\, w_k \in \mathcal{W}_k, \,  f_k(t) = e^{\mu_k t} \hbox{ for } t \in ( T_1, T_2) \}, 
		\\
		& \G= \hbox{Span\,}\{ f_j(t) g_j, \,  j \in \{1, \cdots, J\},\, g_j \in \mathcal{G}_j, \,  f_j(t) = e^{\rho_j t} \hbox{ for } t \in ( T_1, T_2) \}, 
	\end{align*}
	with $\mu_k$, $\mathcal{W}_k$, $\rho_j$ and $\mathcal{G}_j$ as in Theorem \ref{Thm-Ex3}.
\end{remark}
%
%

\section{Further comments}\label{Sec-Further}
We would like to end up this article with a number of comments. 

\subsection{Fenchel Rockafellar theorem}
In fact, our results and proofs can be fit into the framework of Fenchel Rockafellar convex-duality theory \cite{Rockafellar-1967}, \cite{Lions-1992}. Using this theory, the minimization problem of the functional $J_{\text{ap}}$ we consider in the proof of Theorem \ref{Thm-ApproxControl} can be interpreted as the dual problem of the one which consists in minimizing 
\begin{multline*}
	J(u ) = \frac{1}{2} \int_0^T \| y(t)\|_{H}^2 \, dt +\frac{1}{2} \int_0^T \| u(t) \|_{U}^2 \, dt, 
	\\
	\hbox{ among all controls } u \in L^2(0,T; U), \hbox{such that the solution $y$ of  \eqref{ControlledEq} satisfies \eqref{ApproxCont-Req}, \eqref{Moments-Of-Y-in-W} and \eqref{Proj-Of-Y-T-in-E}.}  
\end{multline*}
Similarly, the functional introduced in the proof of Theorem  \ref{Thm-ExactCont}, respectivelty Theorem \ref{Thm-NullCont}, can be interpreted  as the dual functionals corresponding to the minimization of 
$$
	 \frac{1}{2} \int_0^T \| y(t)\|_{H}^2 \, dt +\frac{1}{2} \int_0^T \| u(t) \|_{U}^2 \, dt, 
$$
among all controls $u \in L^2(0,T; U)$ such that the corresponding solution $y$ satisfies all the requirements of Theorem \ref{Thm-ExactCont}, respectively Theorem \ref{Thm-NullCont}. 

In fact, this duality theory is very helpful to reduce the proof of control results to suitable unique continuation and observability properties for the adjoint equation. We refer for instance to the recent work \cite{TrelatWangXu} for developments related to stabilization properties.

\subsection{Operators with time variable coefficients}

Here, for sake of simplicity, we consider a controlled equation given by \eqref{ControlledEq} with coefficients which are independent of time. But this is in fact not really needed and we can consider equations which writes as $y' = A(t) y + B u$ for $t \in (0,T)$ provided it can be endowed with a suitable functional setting.  

For instance, if we consider a heat type equation 
\begin{equation*}
		\left\{
			\begin{array}{ll}
				\partial_{t} y - \Delta y +a_0(t,x) y= u \chi_\omega, \quad & \hbox{ for } (t,x) \in (0,T) \times \Omega, 
				\\
				y(t,x) = 0, \quad &  \hbox{ for }(t,x) \in (0,T) \times \partial \Omega, 
				\\
				y(0, \cdot) = y_0, & \hbox{in } \Omega, 
			\end{array}
		\right.
\end{equation*}
for some $a_0 = a_0(t,x) \in L^\infty((0,T) \times \Omega)$, the same strategy as above applies immediately. In particular, the relevant unique continuation property (corresponding to \eqref{UC}) writes as follows: if $z$ satisfies
\begin{equation}
	\label{HeatEq-Adj-Sec6}
		\left\{
			\begin{array}{ll}
				\partial_{t} z + \Delta z = a_0 (t,x) z + w, \quad &  \hbox{ for }(t,x) \in (0,T) \times \Omega, 
				\\
				z(t,x) = 0, \quad &  \hbox{ for }(t,x) \in (0,T) \times \partial \Omega, 
				\\
				z(T, \cdot) = z_T, & \hbox{in } \Omega, 
			\end{array}
		\right.
\end{equation}
with $z_T \in L^2(\Omega)$ and $w \in \W$, and 
\begin{equation}
	\label{UC-heat-Sec6}
	z(t,x) = g(t,x) \quad \hbox{ in } (0,T) \times \omega, 
\end{equation}
for some $g \in \G$, then 
\begin{equation}
	\label{UC-Sec6}
	z_T = 0, \qquad g = 0, \qquad w = 0.
\end{equation}

It is also clear under this form that the approach proposed in Section \ref{Subsec-Ex3} will not apply easily in such settings. 

In fact, we refer to \cite{Mophou-Nakoulima-2008,Mophou-Nakoulima-2009} for examples in which time-dependent potentials are considered, and even semi-linear parabolic systems, under conditions which correspond to the ones given in Section \ref{Subsec-Ex2}. According to the above remark, these are rather natural conditions to ensure the unique continuation property \eqref{HeatEq-Adj-Sec6}--\eqref{UC-heat-Sec6}--\eqref{UC-Sec6}. 

\subsection{More general settings}

It would be interesting to further develop the approach presented here to a more general abstract setting. One might consider for instance the case of unbounded control operator. 

In fact, it is quite clear that provided $B$ is an admissible control operator, in the sense of \cite[Chapter 4]{TWBook}, the proofs of Theorem \ref{Thm-ApproxControl}, Theorem \ref{Thm-ExactCont}, and Theorem \ref{Thm-NullCont} apply without any change, since the functionals $J_{\text{ap}}$ in \eqref{Functional-Approx-Cont}, $J_{\text{ex}}$ in \eqref{Functional-Exact-Cont} are still continuous on $H \times \G \times \W \times L^2(0,T; H)$ in this case, while $J_{\text{nu}}$ in \eqref{Functional-Null-Cont} can be extended to $X_{obs}$ in \eqref{X-Obs} as before.

One could also try to adapt our results in a non-hilbertian framework and rather deal with Banach spaces, similarly as in \cite{FabPuelZua} in the context of parabolic equations, since this is sometimes more appropriate depending on the equations under considerations. 

It would also be interesting, as suggested by G\"unther Leugering, who I hereby thank, to address similar questions with constraints on the control space restricting the control to be at all time in some bounded convex set (for instance balls), similarly as what is done in \cite{Ahmed-1985}. This question is of major interest for practical applications of control theory.

\subsection{Cost of controllability in $\G$ and $\W$}\label{Subsec-Cost}
It would be interesting to try to quantify the cost of controlling the projections on $\G$ and $\W$ in the spirit of what has been done in \cite{FernandezCaraZuazua1}. There, the cost of the approximate controllability problem \eqref{ApproxCont-Req}, \eqref{Proj-Of-Y-T-in-E}  for some finite dimensional space $E$ (that is, corresponding to $\G = \{0\}$ and $\W = \{0\}$) was discussed for heat equations. 

\subsection{Moment approaches}
In the special case in which the equation \eqref{ControlledEq} is the one-dimensional heat equation on an interval $\Omega = (0,1)$ with Dirichlet boundary conditions controlled at one end, for which the control $u$ is looked for in $L^2(0,T)$ (that is, $U = \R$) and $\G = \hbox{Span\,}\{t \mapsto e^{\mu t} \}$, $\W = \{0\}$, one could  solve the null-controllability problem \eqref{Moments-Of-U-in-G}--\eqref{NullCont-Req} using a moment approach similar to the one developed in \cite{FattoriniRussel71}. However, this becomes less clear when $\G = \hbox{Span\,} \{ g = g(t)\}$ for some arbitrary function $g$ in $L^2(0,T)$ not necessarily of the form of an exponential. It would be interesting to try to develop a moment approach which would recover the results in Theorem \ref{Thm-NullCont} in such case.

In fact, it would be tempting to similarly address the exact controllability problem \eqref{Moments-Of-U-in-G}--\eqref{ExactCont-Req} for $1$-d wave equation using Ingham's type argument \cite{Ing}. But here again when $\G = \hbox{Span\,} \{ g(t)\}$ for some arbitrary function $g$ in $L^2(0,T)$ not necessarily exponential, this does not seem so clear either.

Actually, we are not aware in the literature of a proof of approximate controllability (\emph{i.e.} of controls such that the trajectory $y$ of \eqref{ControlledEq} satisfies \eqref{ApproxCont-Req}) which is based on the  construction of the control through a moment approach, for instance in the case of the heat equation on a half line controlled at the boundary. In fact, even if such an approach could be developed, it is not clear how it could be adapted to solve the approximate controllability problem \eqref{Moments-Of-U-in-G}--\eqref{ApproxCont-Req} for $\G = \hbox{Span\,}\{g = g(t) \}$ with $g$ not of the form of an exponential. It would be very interesting to develop such approaches, in particular to precisely address the cost of controlling the projection on $\G$ and $\W$ mentioned in Section \ref{Subsec-Cost}.

%

%
%
\bibliographystyle{plain}

\begin{thebibliography}{}

\end{thebibliography}


\begin{thebibliography}{10}

\bibitem{Ahmed-1985}
N.~U. Ahmed.
\newblock Finite-time null controllability for a class of linear evolution
  equations on a {B}anach space with control constraints.
\newblock {\em J. Optim. Theory Appl.}, 47(2):129--158, 1985.

\bibitem{Bardos}
C.~Bardos, G.~Lebeau, and J.~Rauch.
\newblock Un exemple d'utilisation des notions de propagation pour le
  contr\^ole et la stabilisation de probl\`emes hyperboliques.
\newblock {\em Rend. Sem. Mat. Univ. Politec. Torino}, (Special Issue):11--31
  (1989), 1988.
\newblock Nonlinear hyperbolic equations in applied sciences.

\bibitem{Bardos-L-R}
C.~Bardos, G.~Lebeau, and J.~Rauch.
\newblock Sharp sufficient conditions for the observation, control and
  stabilization of waves from the boundary.
\newblock {\em SIAM J. Control and Optim.}, 30(5):1024--1065, 1992.

\bibitem{BurqGerard}
N.~Burq and P.~G{\'e}rard.
\newblock Condition n\'ecessaire et suffisante pour la contr\^olabilit\'e
  exacte des ondes.
\newblock {\em C. R. Acad. Sci. Paris S\'er. I Math.}, 325(7):749--752, 1997.

\bibitem{Chowdhury-Erv-2019}
S.~Chowdhury and S.~Ervedoza.
\newblock Open loop stabilization of incompressible navier--stokes equations in
  a 2d channel using power series expansion.
\newblock {\em Journal de Math{\'e}matiques Pures et Appliqu{\'e}es}, 2019.

\bibitem{EvansPDE}
L.~C. Evans.
\newblock {\em Partial differential equations}, volume~19 of {\em Graduate
  Studies in Mathematics}.
\newblock American Mathematical Society, Providence, RI, 1998.

\bibitem{FabPuelZua}
C.~Fabre, J.-P. Puel, and E.~Zuazua.
\newblock Approximate controllability of the semilinear heat equation.
\newblock {\em Proc. Roy. Soc. Edinburgh Sect. A}, 125(1):31--61, 1995.

\bibitem{FattoriniRussel71}
H.~O. Fattorini and D.~L. Russell.
\newblock Exact controllability theorems for linear parabolic equations in one
  space dimension.
\newblock {\em Arch. Rational Mech. Anal.}, 43:272--292, 1971.

\bibitem{FernandezCaraZuazua1}
E.~Fern{\'a}ndez-Cara and E.~Zuazua.
\newblock The cost of approximate controllability for heat equations: the
  linear case.
\newblock {\em Adv. Differential Equations}, 5(4-6):465--514, 2000.

\bibitem{FursikovImanuvilov}
A.~V. Fursikov and O.~Y. Imanuvilov.
\newblock {\em Controllability of evolution equations}, volume~34 of {\em
  Lecture Notes Series}.
\newblock Seoul National University Research Institute of Mathematics Global
  Analysis Research Center, Seoul, 1996.

\bibitem{Peng-2015}
P.~Gao.
\newblock Null controllability with constraints on the state for the 1-{D}
  {K}uramoto-{S}ivashinsky equation.
\newblock {\em Evol. Equ. Control Theory}, 4(3):281--296, 2015.

\bibitem{Hormander-I}
L.~H{\"o}rmander.
\newblock {\em The analysis of linear partial differential operators. {I}},
  volume 256 of {\em Grundlehren der Mathematischen Wissenschaften}.
\newblock Springer-Verlag, Berlin, second edition, 1990.
\newblock Distribution theory and Fourier analysis.

\bibitem{Ing}
A.~E. Ingham.
\newblock Some trigonometrical inequalities with applications to the theory of
  series.
\newblock {\em Math. Z.}, 41(1):367--379, 1936.

\bibitem{John-1949}
F.~John.
\newblock On linear partial differential equations with analytic coefficients.
  {U}nique continuation of data.
\newblock {\em Comm. Pure Appl. Math.}, 2:209--253, 1949.

\bibitem{LebRob}
G.~Lebeau and L.~Robbiano.
\newblock Contr\^ole exact de l'\'equation de la chaleur.
\newblock {\em Comm. Partial Differential Equations}, 20(1-2):335--356, 1995.

\bibitem{Lions}
J.-L. Lions.
\newblock {\em Contr\^olabilit\'e exacte, Stabilisation et Perturbations de
  Syst\`emes Distribu\'es. Tome 1. Contr\^olabilit\'e exacte}, volume RMA 8.
\newblock Masson, 1988.

\bibitem{Lions-1992}
J.-L. Lions.
\newblock Remarks on approximate controllability.
\newblock {\em J. Anal. Math.}, 59:103--116, 1992.
\newblock Festschrift on the occasion of the 70th birthday of Shmuel Agmon.

\bibitem{Lions-Sentinelles-1992}
J.-L. Lions.
\newblock {\em Sentinelles pour les syst\`emes distribu\'{e}s \`a donn\'{e}es
  incompl\`etes}, volume~21 of {\em Recherches en Math\'{e}matiques
  Appliqu\'{e}es [Research in Applied Mathematics]}.
\newblock Masson, Paris, 1992.

\bibitem{Lissy-Privat-Simpore}
P.~Lissy, Y.~Privat, and Y.~Simpor\'{e}.
\newblock Insensitizing control for linear and semi-linear heat equations with
  partially unknown domain.
\newblock {\em ESAIM Control Optim. Calc. Var.}, 25:Art. 50, 21, 2019.

%

\bibitem{Mophou-Nakoulima-2008}
G.~Massengo Mophou and O.~Nakoulima.
\newblock Sentinels with given sensitivity.
\newblock {\em European J. Appl. Math.}, 19(1):21--40, 2008.

\bibitem{Mophou-Nakoulima-2009}
G.~Massengo Mophou and O.~Nakoulima.
\newblock Null controllability with constraints on the state for the semilinear
  heat equation.
\newblock {\em J. Optim. Theory Appl.}, 143(3):539--565, 2009.

\bibitem{Nakoulima-2004}
O.~Nakoulima.
\newblock Contr\^olabilit\'e \`a z\'ero avec contraintes sur le contr\^ole.
\newblock {\em C. R. Math. Acad. Sci. Paris}, 339(6):405--410, 2004.

\bibitem{Rockafellar-1967}
R.~T. Rockafellar.
\newblock Duality and stability in extremum problems involving convex
  functions.
\newblock {\em Pacific J. Math.}, 21:167--187, 1967.

\bibitem{Russell-1971}
D.~L. Russell.
\newblock Boundary value control of the higher-dimensional wave equation.
\newblock {\em SIAM J. Control}, 9:29--42, 1971.

\bibitem{Russell-1971-II}
D.~L. Russell.
\newblock Boundary value control theory of the higher-dimensional wave
  equation. {II}.
\newblock {\em SIAM J. Control}, 9:401--419, 1971.

\bibitem{TrelatWangXu}
E.~Tr\'{e}lat, G.~Wang, and Y.~Xu.
\newblock Characterization by observability inequalities of controllability and
  stabilization properties.
\newblock {\em Pure Appl. Anal.}, 2(1):93--122, 2020.

\bibitem{TWBook}
M.~Tucsnak and G.~Weiss.
\newblock {\em Observation and control for operator semigroups}.
\newblock Birkh\"auser Advanced Texts: Basler Lehrb\"ucher. [Birkh\"auser
  Advanced Texts: Basel Textbooks]. Birkh\"auser Verlag, Basel, 2009.

\bibitem{Zuazua-97}
E.~Zuazua.
\newblock Finite-dimensional null controllability for the semilinear heat
  equation.
\newblock {\em J. Math. Pures Appl. (9)}, 76(3):237--264, 1997.

\end{thebibliography}

%
%
\end{document}